\numberwithin{equation}{section}
\newtheorem{thm}{Theorem}[section]
\newtheorem{prop}[thm]{Proposition}
\newtheorem{lem}[thm]{Lemma}
\newtheorem{defn}[thm]{Definition}
\newtheorem{clm}[thm]{Claim}
\newcommand{\ut}{u_\tau}
\newcommand{\pst}{v_\tau}
\newcommand{\RN}{\mathbb{R}^N}
\newcommand{\omt}{\Omega_T}
\newcommand{\io}{\int_\Omega}
\newcommand{\po}{\partial\Omega }
\newcommand{\edu}{\plap}
\newcommand{\plap}{\textup{div}\left(\rho(\nua^2)\nabla u\right)}
\newcommand{\tplap}{\textup{div}\left(\rho_\tau(\nua^2)\nabla u\right)}
\newcommand{\tplapt}{\textup{div}\left(\rho_\tau(\nuta^2)\nabla u_\tau\right)}
\newcommand{\nua}{|\nabla u|}
\newcommand{\nuta}{|\nabla \ut|}
\begin{document}
	\title[A continuum model for relaxation of interacting steps in crystal surfaces]{Mathematical validation of a continuum model for relaxation of interacting steps in crystal surfaces in $2$ space dimensions}
	\author{Xiangsheng Xu}\thanks
	{Department of Mathematics and Statistics, Mississippi State
		University, Mississippi State, MS 39762.
		{\it Email}: xxu@math.msstate.edu. {\it Calc. Var. Partial Differ. Equ.}, to appear.}
 \keywords{p-Laplacian, surface relaxation, existence, nonlinear fourth order equations.
	} \subjclass{ 65M60, 35K67.}
	\begin{abstract} In this paper we study the boundary value problem for the equation\\ $\mbox{div}\left(D(\nabla u)\nabla\left(\mbox{div}\left(|\nabla u|^{p-2}\nabla u+\beta\frac{\nabla u}{|\nabla u|}\right)\right)\right)+au=f$ in the $z=(x,y)$ plane. This problem is derived from a continuum model for the relaxation of a crystal surface below the roughing temperature. The mathematical challenge is of two folds. First, the mobility $D(\nabla u)$ is a $2\times 2$ matrix whose smallest eigenvalue is not bounded away from $0$ below. Second, the equation contains the $1$-Laplace operator, whose mathematical properties are still not well-understood. Existence of a weak solution is obtained. In particular,  $|\nabla u|$ is shown to be bounded when $p>\frac{4}{3}$.   
		\end{abstract}
	\maketitle

	\section{Introduction}
	
 Let $\Omega$ be a bounded domain in the $z=(x,y)$ plane with sufficiently smooth boundary $\po$. Denote 
 by $\nu$ the unit outward normal to $\po$. We consider the boundary value problem 
 \begin{eqnarray}
 -\textup{div}\left[D(\nabla u){\nabla v}\right] +a u&=&f\ \ \mbox{in $\Omega$,}\label{sta1}\\
 -\mbox{div}\left[\rho(\nua^2)\nabla u\right]&=&v\ \ \mbox{in $\Omega$},\label{sta2}\\
 \nabla u\cdot\nu&=&D(\nabla u){\nabla v}\cdot\nu=0\ \ \mbox{on $\po$,}\label{sta3}
 \end{eqnarray}	
 where $D(\nabla u)$ is a given $2\times 2$ matrix of $\nabla u$ whose eigenvalues may take the value $0$, $a\in (0,\infty)$,  $f=f(z)$ is a known function of its argument, and\begin{equation*}
 \rho(s)=s^{\frac{p-2}{2}}+\beta s^{-\frac{1}{2}}\ \ \mbox{for some $\beta>0, \ p>1$}.
 \end{equation*}  Precise assumptions on the given data will be made at a later time.
 
 Our interest in the problem originates in the mathematical description of the evolution of a crystal surface. 
 It is now well-established that the continuum relaxation of a crystal surface below the roughing temperature is governed by the conservation law 
 \begin{equation*}
 \partial_t u + \mbox{div} J = 0,
 \end{equation*}
 where $u$ is the surface height and $J$ is the adatom flux which is related to
 the mobility $D$ and the local equilibrium density of adatoms $\Gamma_s$ through Fick's law \cite{MK}. This gives 
 $$J = -D\nabla\Gamma_s.$$
 An expression for $\Gamma_s$ can be inferred from 
 the Gibbs-Thomson relation \cite{KMCS,RW,MK} to be
 $$
 \Gamma_s=\rho_0e^{\frac{\mu}{kT_s}},$$
 where $\mu$ is the chemical potential, $\rho_0$ is the constant reference density, $T_s$ is the temperature, and $k$ is the Boltzmann constant. We consider the mobility $D=D(\nabla u)$ introduced in \cite{MK}, which has the form
 \begin{equation*}
 D(\nabla u)=S\Lambda S^T,
 \end{equation*}
 where
 \begin{equation*}
 S=\frac{1}{|\nabla u|}\left(\begin{array}{ll}
 u_{x}&-u_{y}\\
 u_{y}&u_{x}
 \end{array}\right), \ \ \Lambda=\left(\begin{array}{ll}
 \frac{1}{1+q|\nabla u|}&0\\
 0&1
 \end{array}\right) \ \mbox{for some $q\geq 0$}.
 \end{equation*}
 A simple calculation shows
 \begin{equation*}
 D(\nabla u)=\left(\begin{array}{cc}
 1+\frac{u_x^2}{\nua^2}\left(\frac{1}{1+q\nua}-1\right)&\frac{u_xu_y}{\nua^2}\left(\frac{1}{1+q\nua}-1\right)\\
 \frac{u_xu_y}{\nua^2}\left(\frac{1}{1+q\nua}-1\right)&1+\frac{u_y^2}{\nua^2}\left(\frac{1}{1+q\nua}-1\right)
 \end{array}\right).
 \end{equation*}
 Crystal surfaces are known to develop facets, where $\nabla u=0$. To define $D(\nabla u)$ there, we observe that
 \begin{equation*}
 \frac{u_x^2}{\nua^2},\ \ \frac{u_xu_y}{\nua^2},\ \ \frac{u_y^2}{\nua^2}
 \end{equation*}
 are all bounded functions for $\nabla u\ne 0$, while $\frac{1}{1+q\nua}-1=0$  on the set $\{\nabla u=0\}$. 
 Thus it is natural for us to set
 \begin{equation*}
 D(\nabla u)=I,\ \mbox{the $2\times 2$ identity matrix, for $\nabla u= 0$.}
 \end{equation*} 
 That is, $D(\nabla u)$ is well-defined a.e..
 Obviously, $S$ is unitary. Hence,
 \begin{equation}\label{mell}
 D(\nabla u)\xi\cdot\xi\geq \frac{1}{1+q|\nabla u|}|\xi|^2\ \ \ \mbox{for each $\xi\in \mathbb{R}^2$}.
 \end{equation}
 
 Denote by $\Omega$
 the ``step locations area" of interest.  Then we can take the general surface energy $G(u)$ to be
 \begin{eqnarray*}
 	G(u)=\frac{1}{p}\io|\nabla u|^pdz+\beta\io|\nabla u|dz,\ \ p\geq 1,\ \ \beta\in \mathbb{R}.
 \end{eqnarray*}
 A justification for this can be found in \cite{CRSC}. As observed in \cite{MW},  this type of energy forms can retain many of the interesting features of the microscopic system that are lost in the more standard scaling regime. We shall assume that
 \begin{equation*}
 \beta >0.
 \end{equation*}
 The chemical potential $\mu$ is
 defined as the change per atom in the surface energy. That is,
 \begin{equation*}
 \mu=\frac{\delta G}{\delta u}=-\mbox{div}\left(\nua^{p-2}\nabla u+\beta\frac{\nabla u}{\nua}\right)=-\plap.
 \end{equation*}
 To give a proper meaning to the term $\frac{\nabla u}{\nua}$, we follow the approach adopted in \cite{KV} and introduce the function
 \begin{equation}\label{fpd}
 \Phi(\xi)=|\xi|, \ \ \ \xi\in \mathbb{R}^2.
 \end{equation}
 Then $\partial\Phi$, the subgradient of $\Phi$, is given by
 \begin{equation*}
 \partial\Phi(\xi)=\left\{\begin{array}{ll}
 \frac{\xi}{|\xi|} & \mbox{if $\xi\ne 0$,}  \\
 \{\xi: |\xi|\leq 1\}& \mbox{if $\xi=0$.}
 \end{array}\right.
 \end{equation*}
 We say $h=\frac{\nabla u}{\nua}$ if $h\in \left(L^\infty(\Omega)\right)^2$ and
 \begin{equation}\label{hdef}
 h(z)\in\partial\Phi(\nabla u(z))\ \ \mbox{for a.e. $z\in \Omega$.}
 \end{equation}
 After incorporating
 all the physical parameters (except $\beta$ and $q$) into the scaling of the time and/or spatial variables \cite{GLL3,LMM}, we can rewrite the
 evolution equation for $u $  as
 \begin{equation}\label{exp}
 \partial_t u=\mbox{div}\left(D(\nabla u)\nabla e^{\frac{\delta G}{\delta u}}\right).
 \end{equation}
 
 As in \cite{GK},  we linearize the exponential term
 \begin{equation*}
 e^{-\plap}\approx 1-\plap,
 \end{equation*}
 the above equations reduces to 
 \begin{equation}\label{p1}
 \partial_t u
 =-\textup{div}\left[D(\nabla u){\nabla\plap}\right].
 \end{equation}
 This equation is assumed to hold in a space-time domain  $\omt\equiv \Omega\times(0,T)$, $ T>0$,
 coupled with  the following  initial boundary conditions
 \begin{eqnarray}
 \nabla u\cdot\nu=\nabla \edu\cdot\nu &=& 0 \ \ \ \mbox{ on $\Sigma_T\equiv \partial\Omega\times(0,T)$},\label{p2}\\
 u(z,0)&=& u_0(z) \ \ \ \mbox{on $\Omega$.}\label{p3}
 \end{eqnarray}
 The rigorous mathematical analysis of nonlinear differential equations depends primarily upon deriving estimates, but typically also upon using these estimates to justify limiting procedures of various sorts. 
 Unfortunately,
 known priori estimates for this problem are rather weak. As a result, an existence theorem seems to be hopeless.  We will focus on an associated stationary problem instead. This problem is obtained by discretizing the time derivative in \eqref{p1}. Subsequently, we arrive at the following stationary equation 
 \begin{equation}\label{mobility}
 \frac{u-g}{\delta}+\textup{div}\left[D(\nabla u)\plap\right]=0\ \ \mbox{in $\Omega$.}
 \end{equation}
 Here $g$ is a given function. Initially, $g=u_0(x)$. The positive number $\delta$ is the step size. Set $a=\frac{1}{\delta}$  and $f=\frac{1}{\delta}g$. This leads to the boundary value problem \eqref{sta1}-\eqref{sta2}. 
 
 The objective of this paper is to establish an existence assertion for the stationary problem  \eqref{sta1}-\eqref{sta2}, while the time-dependent problem \eqref{p1}-\eqref{p3} is left open. We view our work here as a first step in attacking the more challenging time-dependent case.

 If $D(\nabla u)$ is the identity matrix $I$, both equations \eqref{p1} and \eqref{exp},  coupled with various types of initial boundary conditions, have received tremendous attention.
 For the former, we would like to mention \cite{GK} where the authors proved that there is a finite time extinction of solutions if $p > 1$, while in the latter case we refer the reader to \cite{LX} and the reference therein. The gradient flow theory is essential to the existence of a solution in the existing literature \cite{DFL,G1,GG,GK,GLLX,KV,LX}. If $p\ne 2$ in \eqref{exp} or $D(\nabla u)\ne I$ in \eqref{exp} or \eqref{p1}, the resulting equations have received much less consideration. The gradient flow theory does not seem to be as effective here.  In \cite{G1}, the author dealt with a non-constant, singular $D(\nabla u)$. However, the $p$-Laplace operator in the exponent in \eqref{exp} had been modified there so that the resulting equation became a gradient flow. Physically, one takes $D(\nabla u)=I$ in the diffusion-limited regime of crystal surfaces where the dynamic is dominated by the diffusion across the terraces. However, if the attachment and detachment of atoms at step edges are the main focus, the mobility $D(\nabla u)$ can take very complicated forms \cite{GLL,XX}.

 We return to the stationary problem  \eqref{sta1}-\eqref{sta3}. We give the following definition of a weak solution for the problem.
 \begin{defn}
 	We say that a triplet $(u,v,h)$, where $h$ is a vector $(h_1,h_2)^T$, is a weak solution to \eqref{sta1}-\eqref{sta2} if the following conditions hold:
 	\begin{enumerate}
 		\item[\textup{(D1)}]
 		$u\in W^{1,p}(\Omega)$,
 		$v\in W^{1,\frac{2p}{p+1}}(\Omega)$,  and $h\in \left(L^\infty(\Omega)\right)^2$ with $h(z)\in\partial\Phi(\nabla u(z))$ for a.e. $z\in \Omega$, where $\Phi$ is given as in \eqref{fpd}.
 		\item[\textup{(D2)}] The functions $u, h$ satisfy
 		the
 		problem
 		\begin{eqnarray}
 		-\textup{div}\left(|\nabla u|^{p-2}\nabla u+\beta h\right)&=&v\ \ \textup{in $\Omega$,}\label{rr1}\\
 		\left(|\nabla u|^{p-2}\nabla u+\beta h\right)\cdot\nu&=&0 \ \ \textup{on $\po$}\nonumber
 		\end{eqnarray}
 		in the weak sense, i.e.,
 		\begin{equation}
 		\io\left(|\nabla u|^{p-2}\nabla u+\beta h\right)\cdot\nabla\varphi dz=\io v\varphi dz\ \ \ \mbox{for each $\varphi\in  W^{1,p}(\Omega)\cap L^\infty(\Omega)$, }
 		\end{equation}
 		while $v$ is a weak solution of the problem
 		\begin{eqnarray*}
 			-\textup{div}\left(D(\nabla u)\nabla v\right)+au=f\ \ \textup{in $\Omega$,}\\
 			D(\nabla u)\nabla v\cdot\nu&=&0 \ \ \textup{on $\po$.}
 		\end{eqnarray*}
 	\end{enumerate}
 \end{defn}

 
 Our main result is the following 
 \begin{thm}\label{th1.1}	Let $\Omega$ be a bounded domain in $\mathbb{R}^2$ with Lipschitz boundary $\po$ and $a$ and $\beta$ two positive numbers. Assume that
 	\begin{eqnarray}
 	1&<&p\leq 2\ \ \mbox{and}\label{r33}\\
 	f&\in&  W^{1,p}(\Omega)\cap L^\infty(\Omega).
 	\end{eqnarray}
 	Then
 	there is a  
 	weak solution $u$ to \eqref{sta1}-\eqref{sta2}. 
 	If, in addition,
 	\begin{equation}\label{r1111}
 	p>\frac{4}{3}\ \ \mbox{and $\po$ is $C^{1,1}$,} 
 	\end{equation}
 	then we have $\nua\in L^\infty(\Omega)$.
 \end{thm}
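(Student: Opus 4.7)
The plan is to construct a weak solution by an approximation that removes the singularity of the $1$-Laplace term, derive $\tau$-uniform estimates, pass to the limit, and then run a bootstrap for the $L^\infty$ gradient estimate. I would regularise by setting $\rho_\tau(s)=s^{(p-2)/2}+\beta(s+\tau)^{-1/2}$ for $\tau>0$, which smooths $|\nabla u|^{-1}$ while preserving the variational structure and keeping the $p$-Laplace growth at large gradients untouched.

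For fixed $\tau>0$, I would solve the approximate coupled system by Schauder's fixed-point theorem applied to a map $T\colon L^2(\Omega)\to L^2(\Omega)$, $w\mapsto v$. Given $w$ with mean fixed by the compatibility condition obtained from integrating the first equation, let $u(w)$ be the unique minimiser in the appropriate affine subspace of $W^{1,p}(\Omega)$ of the strictly convex coercive functional
\[
\frac{1}{p}\io|\nabla u|^p+\beta\io\sqrt{|\nabla u|^2+\tau}-\io wu;
\]
then define $v$ from the linear Neumann problem $-\textup{div}(D(\nabla u(w))\nabla v)+au(w)=f$ via Lax--Milgram (after, if needed, a further uniform-ellipticity truncation of $D$, to be removed in a separate limit). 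Compactness of $T$ follows from the compact embedding $W^{1,2p/(p+1)}(\Omega)\hookrightarrow L^2(\Omega)$ in 2-D.

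For the a priori estimates, testing the $u_\tau$-equation against $u_\tau$ and the $v_\tau$-equation against $v_\tau$ and combining yields
\[
\io D(\nabla u_\tau)\nabla v_\tau\cdot\nabla v_\tau+a\io\bigl(|\nabla u_\tau|^p+\beta\sqrt{|\nabla u_\tau|^2+\tau}\,\bigr)\leq\io fv_\tau.
\]
Ellipticity \eqref{mell} and Young's inequality produce the uniform $W^{1,p}$-bound on $u_\tau$ and the weighted bound $\io|\nabla v_\tau|^2/(1+q|\nabla u_\tau|)\leq C$; Hölder's inequality applied to $|\nabla v_\tau|^s=(|\nabla v_\tau|^2/(1+q|\nabla u_\tau|))^{s/2}(1+q|\nabla u_\tau|)^{s/2}$ with $s=2p/(p+1)$ then converts this into a uniform $W^{1,2p/(p+1)}$-bound on $v_\tau$. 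Passing to the limit $\tau\to 0$, the $p$-Laplace nonlinearity is identified by Minty's monotonicity trick, while $h\in\partial\Phi(\nabla u)$ is recovered as the weak-$*$ $L^\infty$ limit of $\nabla u_\tau/\sqrt{|\nabla u_\tau|^2+\tau}$, with the lower semicontinuity of $\io|\nabla u|$ pinning it down on $\{\nabla u=0\}$.

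The main obstacle is the $L^\infty$ bound on $|\nabla u|$ under $p>4/3$. I would run the bootstrap at the approximate level, where the operator is smooth and non-degenerate for fixed $\tau>0$. From $u_\tau\in W^{1,p}(\Omega)$, the weighted estimate, and the 2-D Sobolev embedding, one obtains $v_\tau\in L^{2p}(\Omega)$ uniformly. Treating the $u_\tau$-equation as a generalised $p$-Laplace equation with right-hand side in $L^{2p}$, I would apply Lieberman-type Neumann gradient regularity under the $C^{1,1}$ hypothesis on $\po$ to upgrade $|\nabla u_\tau|$ to $L^{q_1}$ for some $q_1>p$; this feeds back through the Hölder--Sobolev chain, improving the integrability of $v_\tau$, and iteration produces an increasing sequence of exponents. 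A direct calculation of one step of the iteration shows that the exponent strictly improves exactly when $p>4/3$, eventually forcing $|\nabla u_\tau|\in L^\infty$ uniformly in $\tau$, and the bound survives the limit. The delicate points are the $\tau$-uniformity of the structure constants in the Lieberman estimate (for which the large-gradient behaviour of $\rho_\tau$ coincides with that of $\rho$) and the careful matching of indices that produces $4/3$ as the precise threshold.
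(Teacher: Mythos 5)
Your existence argument has essentially the same architecture as the paper's (regularize the singular term, fixed point for the coupled system, energy estimate, monotonicity in the limit), but two points are glossed over. First, $\io f\pst\,dz$ cannot be absorbed by Young's inequality as written, since no Lebesgue norm of $\pst$ appears on the left-hand side of your combined inequality; you must first control the mean of $\pst$ by integrating the $u$-equation and then run the Sobolev--Poincar\'e/H\"older chain, or, as the paper does, use $f$ itself as a test function in the $u$-equation --- which is exactly where the hypothesis $f\in W^{1,p}(\Omega)$ enters. Second, and more seriously, Minty's trick identifies the weak limit of the monotone part $\rho_\tau(\nuta^2)\nabla\ut$, but it does not let you pass to the limit in the first equation, whose coefficient $D_\tau(\nabla\ut)$ is a bounded, non-monotone, nonlinear function of $\nabla\ut$ (discontinuous at the origin). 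For that you need strong, hence a.e., convergence of $\nabla\ut$, which the paper extracts from the quantitative monotonicity inequality of Lemma \ref{lplap}(ii); your proposal never secures it.

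The genuine gap is in the gradient bound. Your bootstrap rests on a Lieberman-type gradient estimate for $-\textup{div}\left(\rho_\tau(\nua^2)\nabla u\right)=v$ with constants uniform in $\tau$. But the linearization of $\xi\mapsto\rho_\tau(|\xi|^2)\xi$ has eigenvalue $(p-1)|\xi|^{p-2}+\beta\tau(|\xi|^2+\tau)^{-3/2}$ along $\xi$ and $|\xi|^{p-2}+\beta(|\xi|^2+\tau)^{-1/2}$ orthogonally, so its ellipticity ratio is of order $|\xi|^{1-p}$ for small $|\xi|$, uniformly as $\tau\to0$: the structure conditions behind such gradient estimates fail precisely on the degenerate set, which is unavoidable here because the $1$-Laplacian admits no Caccioppoli inequality. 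Moreover, if such an estimate were available it would give $\nua\in L^\infty$ for every $p>1$ in a single step, since $\pst$ is in fact uniformly bounded (a De Giorgi iteration your sketch omits); so the mechanism that is supposed to produce the threshold $p>\frac43$ is never actually engaged, and the assertion that ``one step of the iteration improves exactly when $p>4/3$'' is unsubstantiated. The paper's route is different: it rewrites the equation in nondivergence form, differentiates in $x$ to obtain a divergence-form equation for $w=(\ut)_x$ with a degenerate matrix $A_\tau$, and runs De Giorgi on the superlevel sets $\{w\geq K_n\}$ with $K_n\geq1$, on which $\nuta\geq1$ and the degeneracy disappears; the exponent $\frac43$ then arises from the requirement $2(2-p)<p$ needed to control $\int_{S_{n+1}}f_\tau^2\,dz$, where $f_\tau\sim\pst(\nuta^2+\tau)^{\frac{2-p}{2}}$ comes from dividing by the coefficient $H_p$. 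That device --- working only where the gradient is already large --- is the missing idea, and without it (or a substitute) the regularity half of the theorem is not proved.
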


 This theorem is not trivial even when $p=2$. Indeed, in this case we can represent \eqref{rr1} as
 \begin{equation*}
 -\Delta u=v+\beta\mbox{div}h\ \ \textup{in $\Omega$.}
 \end{equation*} 
 Remember that we only have $h\in (L^\infty(\Omega))^2$. Thus the classical $W^{1,q} $ estimate (\cite{R}, p. 82 ) can only yield $u\in W^{1,q}(\Omega)$ for each $q> 1$.
 Although our proof will be carried out under the  assumption \eqref{r33}
 we believe that our theorem is still valid for $p> 2$. In fact, the existence part of the proof in this case only needs mild modification of that for Theorem \ref{th1.1}.  We will leave the details to the interested reader.
 The uniqueness assertion for problem \eqref{sta1}-\eqref{sta2} is still open. The difficulty here is due to the fact that the operator
 $\textup{div}\left[D(\nabla u)\nabla\left({\plap}\right)\right]$ does not seem to be monotone.
 
 The $1$-Laplace operator, denoted by $\Delta_1$,  is the so-called mean curvature operator. It has the property
 \begin{equation*}
 \Delta_1\varphi(u)=\Delta_1u \ \ \mbox{for each smooth increasing function $\varphi$ in one variable}.
 \end{equation*}
 Regularity properties of $1$-harmonic functions are still not well-understood \cite{GG}.
 The redeeming feature in our problem is that we also have a $p$-Laplace operator with $p>1$. Our analysis reveals that this $p$-Laplace operator can dominate the $1$-Laplace operator in a lot of aspects. Nonetheless, many techniques employed in the study of $p$-harmonic functions are no longer applicable to the $p$-Poisson equation. One reason for this is that one can remove the singular term $|\nabla u|^{p-2}$ from the $p$-Laplace equation. To see this, we carry out the divergence in the equation, divide through the resulting equation by $|\nabla u|^{p-2}$, and thereby obtain
 $$
 \left( I+\frac{p-2}{|\nabla u|^2}\nabla u\otimes\nabla u \right):\nabla^2 u=0,
 $$
 where $\nabla^2u$ denotes the Hessian of $u$. Moreover, the notations
 \begin{eqnarray*}
 	\xi\otimes\eta&=&\xi\eta^T \ \ \mbox{for $ \xi,\eta\in \mathbb{R}^N$},\\
 	A:B&=&\sum_{i,j=1}^{N}a_{ij}b_{ij}\ \  \mbox{ for $A, B \in \mathcal{M}^{N\times N}$, the space of all $N\times N$ matrices, } 
 \end{eqnarray*}
 have been employed.
 
 Note that the coefficient matrix in the above equation is uniformly elliptic.
 Obviously, this can not be done for the $p$-Poisson equation. In fact, this largely accounts for our assumption $p>\frac{4}{3}$. To establish an upper bound for $\nua$, we derive an equation satisfied by $u_x$ (resp. $u_y$). Unlike the case of $p$-harmonic functions \cite{L}, the equation for $u_x$ is no longer uniformly elliptic. We circumvent this problem by suitably modifying the classical De Giorgi technique \cite{D}. Remember that an estimate of Caccioppoli-type does not hold for the $1$-Laplace operator. Thus it is a little bit surprising that we are still able to obtain the boundedness of $|\nabla u|$.

 A solution to \eqref{sta1}-\eqref{sta2} will be constructed as a limit of a sequence of approximate solutions. Roughly, we regularize the problem by replacing $\nua$ with $(\nua^2+\tau)^{\frac{1}{2}}$ for $\tau\in (0,1]$. 
 
 This work is organized as follows. In Section \ref{sec2} we collect some relevant known results. The existence part in Theorem \ref{th1.1} is established in Section \ref{sec3}, while the boundedness of $|\nabla u|$ is proved in Section \ref{sec4}. Finally, we make some remarks about the notation. The letter $c$ denotes a positive constant. In theory, its value can be computed from various given data.

 \section{Preliminaries}\label{sec2}
 In this section we state a few preparatory lemmas.

 Relevant interpolation inequalities for Sobolev spaces are listed in the following lemma.
 \begin{lem}\label{linterp}	Let $\Omega$ be a bounded domain in $\mathbb{R}^N$. Denote by
 	$\|\cdot\|_p$ the norm in the space $L^p(\Omega)$. Then we have:
 	\begin{enumerate}
 		\item $ \|f\|_s\leq\varepsilon\|f\|_r+\varepsilon^{-\sigma} \|f\|_p$, where
 		$\varepsilon>0, p\leq s\leq r$, and $\sigma=\left(\frac{1}{p}-\frac{1}{s}\right)/\left(\frac{1}{s}-\frac{1}{r}\right)$;
 		\item If $\po$ is Lipschitz, then for each $\varepsilon >0$ and each $s\in (1, p^*)$, where $p^*=\frac{pN}{N-p}$ if $N>p\geq 1$ and any number bigger than $p$ if $N=p$, there is a positive number $c=c(\varepsilon, p, \po)$ such that
 		\begin{eqnarray}
 		\|f\|_s&\leq &\varepsilon\|\nabla f\|_p+c\|f\|_1\ \ \mbox{for all $f\in
 			W^{1,p}(\Omega)$}.\label{otn9}
 		\end{eqnarray}
 		If $s=p^*$, then we have
 		\begin{equation}
 		\|f\|_{p^*}\leq c(\|\nabla f\|_p+\|f\|_1)\ \ \mbox{for all $f\in
 			W^{1,p}(\Omega)$}.\label{otn19}
 		\end{equation}
 		Here $c$ depends on $p, \po$.
 	\end{enumerate}
 \end{lem}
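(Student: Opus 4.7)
Both assertions are classical; the plan is to reduce Part (1) to Hölder's interpolation plus Young's inequality, and Part (2) to a combination of the Sobolev embedding on Lipschitz domains with Part (1).

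For Part (1), I would introduce the interpolation exponent $\theta\in[0,1]$ defined by $1/s=\theta/p+(1-\theta)/r$, noting that $(1-\theta)/\theta=\sigma$. The generalized Hölder inequality (log-convexity of $L^p$-norms) gives
\begin{equation*}
\|f\|_s\leq \|f\|_p^{\theta}\|f\|_r^{1-\theta}.
\end{equation*}
Rewriting the right-hand side as $(\varepsilon^{-\sigma}\|f\|_p)^\theta(\varepsilon\|f\|_r)^{1-\theta}$ (the scalar factors cancel since $-\sigma\theta+(1-\theta)=0$), I would then apply Young's inequality with conjugate exponents $1/\theta$ and $1/(1-\theta)$ and discard the harmless factors $\theta,\,1-\theta\in(0,1)$ to reach the stated bound.

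For Part (2), the starting point is the standard Sobolev embedding for bounded Lipschitz domains (proved via an extension operator and the Gagliardo–Nirenberg–Sobolev inequality on $\mathbb{R}^N$):
\begin{equation*}
\|f\|_{p^*}\leq c\left(\|\nabla f\|_p+\|f\|_p\right).
\end{equation*}
To trade $\|f\|_p$ for $\|f\|_1$, I would invoke Part (1) with the triple $(1,p,p^*)$: for any $\varepsilon_1>0$, $\|f\|_p\leq \varepsilon_1\|f\|_{p^*}+C_{\varepsilon_1}\|f\|_1$. Substituting and choosing $\varepsilon_1$ small enough to absorb the $\|f\|_{p^*}$ term on the left-hand side yields the critical-case estimate \eqref{otn19}. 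For the subcritical inequality \eqref{otn9}, given $s\in(1,p^*)$ and $\varepsilon>0$, I would interpolate $\|f\|_s$ between $\|f\|_1$ and $\|f\|_{p^*}$ via Part (1), then insert \eqref{otn19}; choosing the interpolation parameter suitably small produces an arbitrarily small coefficient in front of $\|\nabla f\|_p$, absorbing the remainder into $c\|f\|_1$.

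There is no genuine analytic obstacle here; the result is a textbook combination of Hölder, Young, and the Sobolev embedding. The only point requiring care is the domain: the Sobolev embedding with the gradient on the right (rather than the full $W^{1,p}$-norm via extension) is what requires the Lipschitz regularity of $\po$, and this enters only through the existence of a bounded extension operator $W^{1,p}(\Omega)\to W^{1,p}(\mathbb{R}^N)$. The rest is bookkeeping of exponents to verify $\sigma=(1-\theta)/\theta$ and to track the dependence of $c$ on $\varepsilon$, $p$, and $\po$.
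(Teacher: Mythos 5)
Your argument is correct, but note that the paper does not actually prove this lemma: it simply cites Chap.~II of \cite{LSU} and remarks that \eqref{otn9} and \eqref{otn19} can alternatively be obtained by a contradiction (compactness) argument as in \cite{GT}, p.~174. Your route is the direct, constructive one: Part (1) via log-convexity of the $L^p$ norms plus Young's inequality (and your exponent bookkeeping checks out, since $\theta=(1/s-1/r)/(1/p-1/r)$ gives $(1-\theta)/\theta=\sigma$ and the powers of $\varepsilon$ cancel exactly), and Part (2) by combining the Sobolev embedding $\|f\|_{p^*}\leq c(\|\nabla f\|_p+\|f\|_p)$ on a Lipschitz domain with Part (1) applied to the triple $(1,p,p^*)$ and an absorption step, then interpolating down to $s<p^*$. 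The absorption is legitimate because the embedding already guarantees $\|f\|_{p^*}<\infty$ for $f\in W^{1,p}(\Omega)$. Compared with the Ehrling-type contradiction argument the paper points to, your proof is longer in bookkeeping but yields (in principle) explicit control of how $c$ depends on $\varepsilon$, $p$, and the extension constant of $\partial\Omega$, whereas the compactness route is shorter but non-constructive and requires the Rellich--Kondrachov theorem rather than just the extension operator. Both are standard and both suffice for the way the lemma is used in the paper.
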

 This lemma is largely contained in Chap. II of \cite{LSU}.
 One can also prove \eqref{otn9} and \eqref{otn19} by a contradiction argument (\cite{GT}, p.174).
 
 We will collect a few frequently used elementary inequalities in the following three lemmas.
 \begin{lem}\label{elmen}Assume that $a,b$ are two positive numbers. Then we have:
 	\begin{equation*}
 	ab\leq \varepsilon a^p+\frac{1}{\varepsilon^{1/(p-1)}}b^{p^{\prime}},
 	\end{equation*}
 	where $\varepsilon>0,\, p>1$, and $p^\prime=\frac{p}{p-1}$. 
 \end{lem}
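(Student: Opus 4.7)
The plan is to derive the stated inequality as an immediate consequence of the scale-free Young's inequality
\[
xy \leq \frac{x^p}{p} + \frac{y^{p'}}{p'}, \qquad x,y\geq 0,\ \tfrac{1}{p}+\tfrac{1}{p'}=1,
\]
and then rescale to insert the parameter $\varepsilon$. First I would establish the scale-free form via the convexity of the exponential: writing $xy = \exp\!\left(\tfrac{1}{p}\log x^p + \tfrac{1}{p'}\log y^{p'}\right)$ and applying Jensen's inequality (equivalently, concavity of $\log$) gives the bound. Alternatively one can minimize $t\mapsto t^p/p - ty$ in $t$ to reach the same conclusion.

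Next I would introduce the weight by substituting $x = \delta a$, $y = b/\delta$ with $\delta>0$ to be chosen. This produces
\[
ab \leq \frac{\delta^p}{p}\,a^p + \frac{1}{p'\delta^{p'}}\,b^{p'}.
\]
Setting $\delta^p = \varepsilon p$ makes the first coefficient exactly $\varepsilon$. Using $p'/p = 1/(p-1)$, the second coefficient becomes
\[
\frac{1}{p'(\varepsilon p)^{1/(p-1)}} = \frac{p-1}{p^{p'}}\,\varepsilon^{-1/(p-1)}.
\]
A quick check shows $(p-1)/p^{p'}\leq 1$ for every $p>1$ (the function $p\mapsto (p-1)p^{-p/(p-1)}$ is bounded above by $1$ on $(1,\infty)$, attaining its supremum in the limit $p\to\infty$). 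Dropping this harmless prefactor yields the stated bound $ab \leq \varepsilon a^p + \varepsilon^{-1/(p-1)}b^{p'}$.

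There is no real obstacle here — the lemma is a standard weighted Young inequality used freely in the interpolation estimates of Section~\ref{sec2}. The only bookkeeping item is the verification that $(p-1)/p^{p'}\leq 1$, which justifies presenting the cleaner constant $\varepsilon^{-1/(p-1)}$ rather than the tight one.
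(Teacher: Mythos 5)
Your proof is correct. The paper offers no proof of this lemma at all---it simply cites it as the standard Young inequality (\cite{LSU}, p.~58)---so there is nothing to compare against; your derivation (scale-free Young via concavity of $\log$, then the substitution $x=\delta a$, $y=b/\delta$ with $\delta^p=\varepsilon p$) is the standard one, and your check that the resulting coefficient $(p-1)p^{-p'}$ is at most $1$ is right, since $p^{p/(p-1)}>p>p-1$ for every $p>1$.
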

 This lemma is the so-called Young's inequality (\cite{LSU}, p. 58). 
 \begin{lem}\label{lplap}Let $x,y$ be any two vectors in $\RN$. Then:
 	\begin{enumerate}
 		\item[\textup{(i)}] For $p\geq 2$,
 		\begin{equation*}
 		\left(\left(|x|^{p-2}x-|y|^{p-2}y\right)\cdot(x-y)\right)\geq \frac{1}{2^{p-1}}|x-y|^{p};
 		\end{equation*}
 		\item[\textup{(ii)}] For $1<p\leq 2$,
 		\begin{equation*}
 		\left(1+|x|^2+|y|^2\right)^{\frac{2-p}{2}}\left(\left(|x|^{p-2}x-|y|^{p-2}y\right)\cdot(x-y)\right)\geq (p-1)|x-y|^2.
 		\end{equation*}
 	\end{enumerate}
 \end{lem}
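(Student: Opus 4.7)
\medskip

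\noindent\textbf{Proof proposal.} The unifying idea for both parts is to analyze the function $F(z)=|z|^p/p$, whose gradient is $|z|^{p-2}z$, and whose (formal) Hessian on $z\ne 0$ is
\[
\nabla^2 F(z)=|z|^{p-2}I+(p-2)|z|^{p-4}z\otimes z,
\]
so that its eigenvalues are $|z|^{p-2}$ (with multiplicity $N-1$) and $(p-1)|z|^{p-2}$ (in the direction of $z$). Thus $\nabla^2 F(z)\xi\cdot\xi\geq |z|^{p-2}|\xi|^2$ for $p\geq 2$, and $\nabla^2 F(z)\xi\cdot\xi\geq (p-1)|z|^{p-2}|\xi|^2$ for $1<p\leq 2$.

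\medskip

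\noindent\emph{Part (i), $p\geq 2$.} Rather than integrating $\nabla^2 F$ (which loses the tight constant $2^{1-p}$), I will argue by a direct algebraic identity. Let $a=|x|$, $b=|y|$. Expanding and using $2x\cdot y=a^2+b^2-|x-y|^2$, I will show
\[
(|x|^{p-2}x-|y|^{p-2}y)\cdot(x-y)=\tfrac12(a^{p-2}-b^{p-2})(a^2-b^2)+\tfrac12(a^{p-2}+b^{p-2})|x-y|^2.
\]
Since $p\geq 2$, the first term is nonnegative (the two factors have the same sign). It remains to bound $a^{p-2}+b^{p-2}$ from below in terms of $|x-y|^{p-2}$. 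Using $|x-y|\leq a+b$ together with Jensen's inequality when $p\geq 3$ (convexity of $t\mapsto t^{p-2}$) and subadditivity when $2\leq p\leq 3$ (concavity of $t\mapsto t^{p-2}$), I get in both cases $(a+b)^{p-2}\leq 2^{p-2}(a^{p-2}+b^{p-2})$, hence
\[
\tfrac12(a^{p-2}+b^{p-2})|x-y|^2\geq \frac{|x-y|^p}{2^{p-1}}.
\]

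\medskip

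\noindent\emph{Part (ii), $1<p\leq 2$.} Here I will use the integral representation along the segment $z(t)=y+t(x-y)$, $t\in[0,1]$:
\[
(|x|^{p-2}x-|y|^{p-2}y)\cdot(x-y)=\int_0^1 \nabla^2 F(z(t))(x-y)\cdot(x-y)\,dt\geq (p-1)|x-y|^2\int_0^1|z(t)|^{p-2}\,dt.
\]
Because $p-2\leq 0$, the function $s\mapsto s^{p-2}$ is decreasing, and since
\[
|z(t)|\leq \max(|x|,|y|)\leq \sqrt{|x|^2+|y|^2}\leq (1+|x|^2+|y|^2)^{1/2},
\]
we have $|z(t)|^{p-2}\geq (1+|x|^2+|y|^2)^{(p-2)/2}$ for every $t$. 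Multiplying through by $(1+|x|^2+|y|^2)^{(2-p)/2}$ yields the desired estimate.

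\medskip

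\noindent\emph{Main obstacle.} The only subtlety is in Part (ii): $\nabla^2 F$ is singular at $z=0$, and if $x,y$ are antiparallel then $z(t_0)=0$ for some $t_0\in(0,1)$. I will handle this by regularizing $F$ to $F_\varepsilon(z)=(|z|^2+\varepsilon)^{p/2}/p$, for which the same Hessian lower bound $(p-1)(|z|^2+\varepsilon)^{(p-2)/2}$ holds, establishing the corresponding inequality for the smooth map $z\mapsto(|z|^2+\varepsilon)^{(p-2)/2}z$, and then letting $\varepsilon\to 0^+$ using dominated convergence (noting that $\int_0^1|z(t)|^{p-2}\,dt$ remains finite since $|z(t)|$ vanishes at most linearly in $t$ and $p>1$). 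No such issue arises in Part (i), since the algebraic identity there is valid for arbitrary $x,y\in\mathbb{R}^N$.
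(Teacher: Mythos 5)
Your proof is correct. Note that the paper itself gives no argument for this lemma; it simply cites Lindqvist (\cite{LI}, pp.~71--74), so there is no in-text proof to compare against. Your part (ii) is essentially the standard argument from that reference: integrate the Hessian of $F(z)=\frac{1}{p}|z|^p$ along the segment $y+t(x-y)$, use that its smallest eigenvalue is $(p-1)|z|^{p-2}$ when $1<p\leq 2$, and bound $|z(t)|^{p-2}$ below by $(1+|x|^2+|y|^2)^{\frac{p-2}{2}}$ since the exponent is nonpositive; your regularization $F_\varepsilon(z)=\frac{1}{p}(|z|^2+\varepsilon)^{p/2}$ to dodge the singularity on the segment is legitimate (and, incidentally, is exactly the form $\Psi_\tau$ the paper actually uses in Claim 3.6, where it reproves this estimate for the regularized flux in \eqref{rev1}--\eqref{rev2}). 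Your part (i) takes a genuinely different and arguably cleaner route: the exact algebraic identity
\begin{equation*}
\left(|x|^{p-2}x-|y|^{p-2}y\right)\cdot(x-y)=\tfrac12\left(|x|^{p-2}-|y|^{p-2}\right)\left(|x|^2-|y|^2\right)+\tfrac12\left(|x|^{p-2}+|y|^{p-2}\right)|x-y|^2,
\end{equation*}
whose first term is nonnegative for $p\geq 2$, reduces everything to $|x|^{p-2}+|y|^{p-2}\geq 2^{2-p}|x-y|^{p-2}$. Your two-case convexity/subadditivity derivation of that bound is fine, though you could avoid the case split entirely by observing $\max(|x|,|y|)\geq \tfrac12|x-y|$, whence $|x|^{p-2}+|y|^{p-2}\geq \max(|x|,|y|)^{p-2}\geq 2^{2-p}|x-y|^{p-2}$. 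The only cosmetic caveat is the usual convention $|z|^{p-2}z:=0$ at $z=0$ when $p<2$ (and $0^0$ when $p=2$), which your regularization already handles.
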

 The proof of this lemma is contained in (\cite{LI}, p. 71-74).

 \begin{lem}\label{ynb}
 	Let $\{y_n\}, n=0,1,2,\cdots$, be a sequence of positive numbers satisfying the recursive inequalities
 	\begin{equation*}
 	y_{n+1}\leq cb^ny_n^{1+\alpha}\ \ \mbox{for some $b>1, c, \alpha\in (0,\infty)$.}
 	\end{equation*}
 	If
 	\begin{equation*}
 	y_0\leq c^{-\frac{1}{\alpha}}b^{-\frac{1}{\alpha^2}},
 	\end{equation*}
 	then $\lim_{n\rightarrow\infty}y_n=0$.
 \end{lem}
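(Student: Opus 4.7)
The plan is to prove by induction that $y_n\leq y_0 b^{-n/\alpha}$ for every $n\geq 0$, from which the conclusion is immediate because $b>1$ forces $b^{-n/\alpha}\to 0$.

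The choice of the decay rate $b^{-1/\alpha}$ is not a guess but is forced by the recursion: substituting an ansatz $y_n\leq A r^n$ into $y_{n+1}\leq cb^n y_n^{1+\alpha}$ balances the growth $b^n$ against the contraction $r^{n\alpha}$ exactly when $r=b^{-1/\alpha}$, and matching the constant then gives the threshold on $y_0$ stated in the lemma. With this in mind, the base case $n=0$ is trivial, and for the inductive step I would assume $y_n\leq y_0 b^{-n/\alpha}$ and plug into the recursion to obtain
\begin{equation*}
y_{n+1}\leq cb^n\bigl(y_0 b^{-n/\alpha}\bigr)^{1+\alpha}=\bigl(cy_0^\alpha\bigr)\, y_0\, b^{n-n(1+\alpha)/\alpha}=\bigl(cy_0^\alpha\bigr)\, y_0\, b^{-n/\alpha}.
\end{equation*}
The hypothesis $y_0\leq c^{-1/\alpha}b^{-1/\alpha^2}$ is exactly equivalent to $cy_0^\alpha\leq b^{-1/\alpha}$, so this last expression is bounded by $y_0 b^{-(n+1)/\alpha}$, closing the induction.

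There is no real obstacle here; the entire argument is a one-line induction, and the only ``content'' is identifying the geometric rate $b^{-1/\alpha}$, which is dictated by dimensional balance of the two exponents in the recursion. The hypothesis on $y_0$ is the smallness condition that makes the contraction factor $cy_0^\alpha$ dominate the expansion factor $b^{1/\alpha}$ that the recursion introduces at each step.
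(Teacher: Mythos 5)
Your induction is correct and complete: the base case is trivial, the exponent bookkeeping $n - n(1+\alpha)/\alpha = -n/\alpha$ checks out, and the hypothesis on $y_0$ is precisely $cy_0^\alpha\leq b^{-1/\alpha}$, which closes the step. The paper itself gives no proof but cites DiBenedetto (\cite{D}, p.~12), and your argument is exactly the standard one found there, so nothing further is needed.
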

 This lemma can be found in (\cite{D}, p.12).

 Our existence theorem is based upon the following fixed point theorem, which is often called the Leray-Schauder Theorem (\cite{GT}, p.280).
 \begin{lem}
 	Let $B$ be a map from a Banach space $\mathcal{B}$ into itself. Assume:
 	\begin{enumerate}
 		\item[(H1)] $B$ is continuous;
 		\item[(H2)] the images of bounded sets of $B$ are precompact;
 		\item[(H3)] there exists a constant $c$ such that
 		$$\|z\|_{\mathcal{B}}\leq c$$
 		for all $z\in\mathcal{B}$ and $\sigma\in[0,1]$ satisfying $z=\sigma B(z)$.
 	\end{enumerate}
 	Then $B$ has a fixed point.
 \end{lem}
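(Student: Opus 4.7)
The plan is to reduce the statement to the Schauder fixed point theorem via a radial retraction onto a closed ball. Let $c$ be the constant from (H3), set $M = c + 1$, and let $K = \{z \in \mathcal{B} : \|z\|_{\mathcal{B}} \le M\}$; this is a closed, bounded, convex subset of $\mathcal{B}$. The strategy is to construct a modified operator $\tilde{B}: K \to K$ to which Schauder applies, and then use (H3) to argue that any fixed point of $\tilde{B}$ is automatically a fixed point of $B$.

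First I would introduce the radial retraction $r: \mathcal{B} \to K$ defined by $r(w) = w$ when $\|w\|_{\mathcal{B}} \le M$ and $r(w) = M w/\|w\|_{\mathcal{B}}$ otherwise. A direct check shows $r$ is continuous (and in fact $1$-Lipschitz). Setting $\tilde{B} = r \circ B|_K$, hypothesis (H1) together with continuity of $r$ yields continuity of $\tilde{B}$, while (H2) implies $B(K)$ is precompact and hence so is $\tilde{B}(K) = r(B(K))$. Thus $\tilde{B}$ is a continuous compact self-map of the closed bounded convex set $K$.

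Next I would invoke the Schauder fixed point theorem, treated here as a black box (its own proof proceeds by approximating compact operators by finite-rank maps and applying Brouwer), to produce $z^* \in K$ with $z^* = \tilde{B}(z^*) = r(B(z^*))$. The final step is to show that $z^*$ is a genuine fixed point of $B$. If $\|B(z^*)\|_{\mathcal{B}} \le M$, then $r$ acts as the identity at $B(z^*)$ and $z^* = B(z^*)$. Otherwise $\|B(z^*)\|_{\mathcal{B}} > M$, and the defining formula for $r$ gives $z^* = \sigma B(z^*)$ with $\sigma = M/\|B(z^*)\|_{\mathcal{B}} \in (0,1)$, while simultaneously forcing $\|z^*\|_{\mathcal{B}} = M$. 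But (H3) applied with this $\sigma$ forces $\|z^*\|_{\mathcal{B}} \le c = M - 1$, a contradiction.

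The main point, rather than a computational obstacle, is the calibration: one must choose the ball radius $M$ strictly above the a priori bound $c$ so that the contradiction in the last paragraph is strict. Everything else is structural bookkeeping, provided the Schauder fixed point theorem is available; compactness of $\tilde{B}$ and the identification of the fixed point via (H3) both fall out immediately from the construction.
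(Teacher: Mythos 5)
The paper does not prove this lemma at all---it is quoted as the Leray--Schauder theorem with a citation to Gilbarg--Trudinger, p.~280---and your argument is correct and is essentially the standard proof found there: truncate $B$ by the radial retraction onto a closed ball of radius strictly larger than the a priori bound from (H3), apply Schauder's theorem to the resulting continuous compact self-map, and use (H3) to exclude the case where the retraction actually truncates. The only inaccuracy is the parenthetical claim that the radial retraction is $1$-Lipschitz, which fails in a general Banach space (the sharp Lipschitz constant can be as large as $2$); since only continuity of $r$ is used, this does not affect the proof.
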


 \section{Existence}\label{sec3}
 In this section we first design an approximation scheme for problem \eqref{sta1}-\eqref{sta2}. Then we obtain a weak solution by passing to the limit in our approximate problems.
 
 As discussed in the introduction, we let
 \begin{equation}
 v=-\plap.
 \end{equation}
 Then regularize this equation by adding the term $\tau |u|^{p-2}u,\ \tau\in (0,1]$, to its right-hand side and replacing $\rho$ by 
 \begin{equation}\label{r2}
 \rho_\tau(s)=(s+\tau)^{\frac{p-2}{2}}+\beta (s+\tau)^{-\frac{1}{2}}.
 \end{equation}
 The former is due to the Neumann boundary condition in our problem, while the latter takes care of the problem with the set where  $\nua=0$. For the same reason, we substitute $ D(\nabla u)$ with 
 \begin{equation}
 D_\tau(\nabla u)=
 \left(\begin{array}{cc}
 1+\tau+\frac{u_x^2}{\nua^2+\tau}\left(\frac{1}{1+q\nua}-1\right)&\frac{u_xu_y}{\nua^2+\tau}\left(\frac{1}{1+q\nua}-1\right)\\
 \frac{u_xu_y}{\nua^2+\tau}\left(\frac{1}{1+q\nua}-1\right)&1+\tau+\frac{u_y^2}{\nua^2+\tau}\left(\frac{1}{1+q\nua}-1\right)
 \end{array}\right).
 \end{equation} 
 It is easy to verify that we have
 \begin{eqnarray}
 D_\tau(\nabla u)\xi\cdot\xi&=&(1+\tau)|\xi|^2+\left(\frac{1}{1+q\nua}-1\right)\frac{(\nabla u\cdot\xi)^2}{\nua^2+\tau}\nonumber\\
 &\geq&\left( \frac{1}{1+q\nua}+\tau\right)|\xi|^2\ \ \mbox{for each $\xi\in \mathbb{R}^2$.}\label{r22}
 \end{eqnarray}
 Furthermore, each entry in $D_\tau(\nabla u)$ is  bounded by $2$.
 Finally,  we still need to add $\tau v$ to \eqref{sta1}.
 This leads to the study of the system 
 \begin{eqnarray}
 -\mbox{div}\left(D_\tau(\nabla u)\nabla v\right)+\tau v&=&f-au\ \ \ \mbox{in $\Omega$},\label{ot1}\\
 -\tplap +\tau|u|^{p-2} u &=&v \ \ \ \mbox{in $\Omega$}\label{plap11}
 \end{eqnarray}
 coupled with the boundary conditions
 \begin{equation}
 \rho_\tau(|\nabla u|^2)\nabla u\cdot\nu=D_\tau(\nabla u)\nabla v\cdot\nu=0\ \ \ \mbox{on $\partial\Omega$}.\label{ot2}
 \end{equation}
 This is our approximating problem.
 Basically, we have transformed a fourth-order equation into a system of two second-order equations.

 \begin{thm}\label{p21}
 	Let $\Omega$ be a bounded domain in $\mathbb{R}^2$ with  Lipschitz boundary, and assume that $1<p$ and $f\in L^\infty(\Omega)$. Then there is a weak solution $(v, u)$ to \eqref{ot1}-\eqref{ot2} with 
 	\begin{eqnarray*}
 		v\in W^{1,2}(\Omega)\cap L^\infty(\Omega),\ \ \ 	u\in W^{1,p}(\Omega)\cap L^\infty(\Omega).
 	\end{eqnarray*}.
 \end{thm}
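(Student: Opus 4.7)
The plan is to apply the Leray--Schauder fixed point theorem (the lemma in the preliminaries) in the Banach space $\mathcal{B}=L^2(\Omega)$. Define a map $T:\mathcal{B}\to\mathcal{B}$ by $T(\hat v)=v$ in two steps. First, given $\hat v\in L^2(\Omega)$, solve the monotone nonlinear Neumann problem
\[
-\textup{div}\bigl(\rho_\tau(|\nabla u|^2)\nabla u\bigr)+\tau|u|^{p-2}u=\hat v\quad\text{in }\Omega
\]
for $u\in W^{1,p}(\Omega)$: the operator is continuous, strictly monotone and coercive on $W^{1,p}(\Omega)$ thanks to the $\tau$-regularization, so Browder--Minty applies and produces a unique weak solution. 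Then, with the resulting $u$ in hand, solve the linear elliptic Neumann problem
\[
-\textup{div}\bigl(D_\tau(\nabla u)\nabla v\bigr)+\tau v=f-au\quad\text{in }\Omega
\]
for $v\in W^{1,2}(\Omega)$, which is immediate from Lax--Milgram because \eqref{r22} makes the coefficient matrix uniformly elliptic with constant $\ge\tau$ and each entry bounded by $2$. Set $T(\hat v)=v$; fixed points of $T$ are by construction weak solutions of \eqref{ot1}--\eqref{ot2}.

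Compactness (H2) of $T$ is essentially built in: $T$ factors through $W^{1,2}(\Omega)$, and Rellich--Kondrachov gives the compact embedding $W^{1,2}(\Omega)\hookrightarrow L^2(\Omega)$ in two dimensions. Continuity (H1) follows from step-by-step passage to the limit: if $\hat v_n\to\hat v$ in $L^2(\Omega)$, strict monotonicity of the $p$-Laplace-like operator upgrades the weak limit to strong $W^{1,p}$ convergence $u_n\to u$ and hence a.e.\ convergence of $\nabla u_n$ along a subsequence; dominated convergence then gives $D_\tau(\nabla u_n)\to D_\tau(\nabla u)$ in every $L^q(\Omega)$, $q<\infty$, and the linear equation passes to the limit in $W^{1,2}(\Omega)$, so $v_n\to v$ in $L^2(\Omega)$.

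The heart of the matter is the a priori estimate for (H3). If $\hat v=\sigma T(\hat v)=\sigma v$ for some $\sigma\in[0,1]$, I test the nonlinear equation (with right-hand side $\sigma v$) by $au$ and the linear equation by $\sigma v$, then add; the cross terms $\pm a\sigma\io uv\,dz$ cancel, leaving
\[
a\io\rho_\tau(|\nabla u|^2)|\nabla u|^2\,dz+a\tau\io|u|^p\,dz+\sigma\io D_\tau(\nabla u)|\nabla v|^2\,dz+\sigma\tau\io v^2\,dz=\sigma\io fv\,dz.
\]
Young's inequality on the right, together with the elementary bound $\rho_\tau(s)s\ge c s^{p/2}-c'$, yields $\|u\|_{W^{1,p}}\le C$ uniformly in $\sigma$ and, after dividing through by $\sigma$ where needed, $\|v\|_{W^{1,2}}\le C$ independently of $\sigma$ as well. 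Therefore $\|\hat v\|_{L^2}=\sigma\|v\|_{L^2}\le C$ uniformly in $\sigma\in[0,1]$, verifying (H3).

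Leray--Schauder then produces a fixed point, giving a weak solution $(u,v)$ of \eqref{ot1}--\eqref{ot2} in $W^{1,p}(\Omega)\times W^{1,2}(\Omega)$. The $L^\infty$ regularity required by the conclusion follows a posteriori: the Sobolev embedding $W^{1,2}(\Omega)\hookrightarrow L^q(\Omega)$ for every $q<\infty$ in two dimensions lets us run Moser iteration on the nonlinear equation to obtain $u\in L^\infty(\Omega)$; once $f-au\in L^\infty(\Omega)$, a De Giorgi/Stampacchia truncation on the linear equation gives $v\in L^\infty(\Omega)$. The hardest point I anticipate is securing strong $W^{1,p}$ convergence of $u_n$ in step (i) under mere $L^2$ convergence of $\hat v_n$, which is handled by a standard Minty-type identification using strict monotonicity, and this is precisely why the cascade must be set up in the order ``solve for $u$ first, then for $v$''.
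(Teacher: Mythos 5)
Your proposal follows essentially the same route as the paper: a Leray--Schauder fixed point argument with the identical cascade (solve the regularized monotone $u$-equation first, then the uniformly elliptic linear $v$-equation), compactness through $W^{1,2}(\Omega)\hookrightarrow L^2(\Omega)$, and an a priori bound for (H3) obtained by exploiting the sign of the cross term $\io uv\,dz$ --- your ``test by $au$ and add'' cancellation is exactly the paper's observation that $\io uv\,dz=\io\rho_\tau(\nua^2)\nua^2dz+\tau\io|u|^pdz\geq 0$. The only cosmetic differences are that the paper iterates in $L^q(\Omega)$ with $q=\max\{p/(p-1),2\}$ rather than $L^2(\Omega)$, and obtains $\|v\|_\infty$ by testing with $|v|^{s-2}v$ and sending $s\to\infty$ (using the $\tau v$ zeroth-order term) instead of a truncation argument; both variants are sound.
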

 \begin{proof}
 	The existence assertion will be established via the Leray-Schauder Theorem. For this purpose, we let
 	\begin{equation}
 	q=\max\left\{\frac{p}{p-1},2\right\}
 	\end{equation}
 	and define an operator $B$ from $L^q(\Omega)$
 	into itself as follows: for each $g\in L^q(\Omega)$ we say $B(g)= v$ if $ v$ is the unique solution of the linear boundary value problem
 	\begin{eqnarray}
 	-\mbox{div}\left(D_\tau(\nabla u)\nabla v\right)+\tau v &=&f-au\ \ \ \mbox{in $\Omega$},\label{om3}\\
 	D_\tau(\nabla u)\nabla v\cdot\nu&=&0\ \ \ \mbox{on $\partial\Omega$},\label{om4}
 	\end{eqnarray}
 	where $u$ solves the problem
 	\begin{eqnarray}
 	-\tplap +\tau |u|^{p-2}u &=&g\ \ \ \mbox{in $\Omega$},\label{om5}\\
 	\rho_\tau(|\nabla u|^2)\nabla u\cdot\nu&=& 0\ \ \ \mbox{on $\partial\Omega$}.\label{om6}
 	\end{eqnarray}
 	To see that $B$ is well-defined, we can easily infer from a theorem in (\cite{O}, p.124) that the problem \eqref{om5}-\eqref{om6} has a weak solution $u$ in the space $W^{1,p}(\Omega)$. It is elementary to check that the function $\frac{1}{p}(s^2+\tau)^{\frac{p}{2}}+\beta(s^2+\tau)^{\frac{1}{2}}$ is convex on the interval $(-\infty,\infty)$, from whence follows that the function
 	\begin{equation}\label{pht}
 	\Psi_\tau(\xi)=\frac{1}{p}(|\xi|^2+\tau)^{\frac{p}{2}}+\beta(|\xi|^2+\tau)^{\frac{1}{2}}
 	\end{equation}  is convex on $\mathbb{R}^2$, and hence its gradient $\nabla \Psi_\tau(\xi)= (|\xi|^2+\tau)^{\frac{p-2}{2}}\xi+\beta (|\xi|^2+\tau)^{-\frac{1}{2}}\xi=\rho_\tau(|\xi|^2)\xi$ is monotone, i.e.,
 	\begin{equation}
 	(\rho_\tau(|\xi|^2)\xi-\rho_\tau(|\eta|^2)\eta)\cdot(\xi-\eta)\geq 0\ \ \ \mbox{for all $\xi,\eta\in\mathbb{R}^2$.}
 	\end{equation} This together with the fact that $|u|^{p-2}u$ is a strictly monotone function of $u$ implies that the problem \eqref{om5}-\eqref{om6} has a unique weak solution in $W^{1,p}(\Omega)$. Since $q\geq2$, we are in a position to apply the proof of Lemma 2.4 in \cite{X5}, thereby obtaining
 	\begin{equation}
 	\|u\|_\infty\leq c.
 	\end{equation}
 	Here $c$ depends on $\tau$ and $\|g\|_q$.
 	
 	Note that \eqref{om3} is a uniformly elliptic linear equation in $ v$. Existence and uniqueness of a weak solution $ v$ to the problem \eqref{om3}-\eqref{om4} in the space $W^{1,2}(\Omega)$ follow easily. Moreover, the right-hand side function $f-au$ lies in $L^\infty(\Omega)$. Subsequently, we also have
 	$ v\in L^\infty(\Omega)$. That is, the range of $B$ is contained in 
 	$W^{1,2}(\Omega)\cap L^\infty(\Omega)$. The Sobolev embedding theorem asserts that this function space is compactly embedded in $L^q(\Omega)$. This immediately implies that $B$ maps bounded sets into precompact ones.
 	It is fairly straightforward to show that $B$ is also continuous.
 	
 	It remains to verify
 	(H3) in the Leray-Schauder Theorem. That is, we must 
 	show that there is a positive number $c$ such that
 	\begin{equation}
 	\| v\|_q\leq c\label{ot8}
 	\end{equation}
 	for all $ v\in L^q(\Omega)$ and $\sigma\in [0,1]$ satisfying
 	$$ v=\sigma B( v).$$
 	This equation is equivalent to the boundary value problem
 	\begin{eqnarray}
 	-\mbox{div}\left(D_\tau(\nabla u)\nabla v\right)+\tau v &=&\sigma(f-au)\ \ \ \mbox{in $\Omega$},\label{ot9}\\
 	-\tplap +\tau|u|^{p-2} u &=& v \ \ \ \mbox{in $\Omega$},\label{ot10}\\
 	\nabla u\cdot\nu=\nabla v\cdot\nu&=& 0\ \ \ \mbox{on $\partial\Omega$}.
 	\end{eqnarray}
 	To establish \eqref{ot8}, we calculate from \eqref{ot10} that
 	\begin{equation}
 	\io u v dz=\io \rho_\tau(\nua^2)\nua^2dz+\tau\io| u|^p dz\geq 0.
 	\end{equation}
 	With this in mind, we derive from \eqref{ot9} that
 	\begin{eqnarray*}
 		\io D_\tau(\nabla u)\nabla v\cdot \nabla v dz+\tau\io| v|^2dz&=&\sigma\io f v dz-\sigma a\io u v dz\nonumber\\
 		&\leq &\io |f v|dz\leq \frac{\tau}{2}\io| v|^2dz+\frac{c}{\tau}\io|f|^2dz.
 	\end{eqnarray*}
 	By \eqref{r22},
 	\begin{equation}
 	\io\frac{1}{1+q\nua}|\nabla v|^2dz+\tau\| v\|_2^2\leq \frac{c}{\tau}\|f\|_2^2.
 	\end{equation}
 	That is, $ v$ is bounded in $L^2(\Omega)$. This enables us to use the proof of Lemma 2.4 in \cite{X5} again to yield
 	the boundedness of $u$.
 	
 	For each $s>2$ the function $| v|^{s-2}  v$ lies in $W^{1,2}(\Omega)$ and $\nabla\left(| v|^{s-2}  v\right)=(s-1)| v|^{s-2}\nabla v$. 
 	Use	this function as a test in \eqref{ot9}
 	to obtain
 	\begin{eqnarray*}
 		(s-1)\int_\Omega| v|^{s-2}D_\tau(\nabla u)\nabla v\cdot\nabla v\, dz
 		+\tau\int_\Omega| v|^{s}\, dz&=&\sigma\int_\Omega(f-au)| v|^{s-2}  v \, dz\\
 		&\leq &\int_\Omega|f-au|| v|^{s-1}\, dz\\
 		&\leq &\|f-au\|_{s}\| v\|_{s}^{s-1}.
 	\end{eqnarray*}
 	Dropping the first integral in the above inequality yields
 	\begin{eqnarray}
 	\| v\|_{s}&\leq&\frac{1}{\tau}\|f-au\|_{s}\ \ \textup{for each $s>2$, and thus}\label{c21} \\
 	\| v\|_\infty&\leq&\frac{1}{\tau}\|f-au\|_\infty.\label{ot12}
 	\end{eqnarray}
 	This completes the proof of the theorem.
 \end{proof}	
 It is possible for us to obtain higher regularity for $(v,u)$. For example, we can easily show that $v$ is H\"{o}lder continuous on $\overline{\Omega}$. We will not pursue this here.
 \begin{proof}[Proof of Theorem \ref{th1.1}]
 	We shall show that we can take $\tau\rightarrow 0$ in \eqref{ot1}-\eqref{ot2}. For this purpose we need to derive estimates that are uniform in $\tau$. We write
 	\begin{equation}
 	u=\ut,\ \  v=\pst.
 	\end{equation}
 	Then problem \eqref{ot1}-\eqref{ot2} becomes
 	\begin{eqnarray}
 	-\textup{div}(D_\tau(\nabla \ut)\nabla\pst) +\tau\pst+a \ut &=&f\ \ \ \mbox{in $\Omega$},\label{ot1t}\\
 	-\tplapt+\tau|\ut|^{p-2} \ut &=&\pst \ \ \ \mbox{in $\Omega$},\label{ot3t}\\
 	\nabla \ut\cdot\nu&=&\nabla \pst\cdot\nu=0\ \ \ \mbox{on $\partial\Omega$}.\label{ot2t}
 	\end{eqnarray}
 	We also view $\{\ut,\pst\}$ as a sequence in the subsequent proof. Take $\tau=\frac{1}{j}$, where $j$ is a positive integer, for example. The rest of the proof is divided into several claims. 
 \end{proof}
 \begin{clm}We have
 	\begin{eqnarray}
 	\io  \frac{1}{1+q\nuta}|\nabla\pst|^2dz+\tau\io\pst^2dz+\io\Psi_\tau(\nabla\ut)dz+\tau\io|\ut|^pdz&\leq &c,\label{rue}\\
 	\|\ut\|_{W^{1,p}(\Omega)}&\leq &c,\label{ue}\\
 	\|\pst\|_{W^{1,\frac{2p}{p+1}}(\Omega)}&\leq & c,\label{ptb}
 	\end{eqnarray}
 	where the function $\Psi_\tau$ is given as in \eqref{pht}.
 \end{clm}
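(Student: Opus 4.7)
The plan is to combine two well-chosen test-function pairings in the coupled system \eqref{ot1t}--\eqref{ot3t} so that the cross term $a\io\ut\pst\,dz$ cancels, together with a third testing by $f$ itself in order to transfer the right-hand side onto the $p$-Laplacian energy. Concretely, I would test \eqref{ot3t} by $a\ut$ and \eqref{ot1t} by $\pst$; adding the two identities eliminates $a\io\ut\pst\,dz$ and gives
$$\io D_\tau(\nabla\ut)\nabla\pst\cdot\nabla\pst\,dz+\tau\io\pst^2\,dz+a\io\rho_\tau(\nuta^2)|\nabla\ut|^2\,dz+a\tau\io|\ut|^p\,dz=\io f\pst\,dz.$$
The ellipticity estimate \eqref{r22}, together with the convexity inequality $\rho_\tau(|\xi|^2)|\xi|^2\geq \Psi_\tau(\xi)-\Psi_\tau(0)$ (and $\Psi_\tau(0)\leq c$ uniformly for $\tau\in(0,1]$), reduces matters to bounding $\io f\pst\,dz$ independently of $\tau$.

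For this last step I would exploit the assumption $f\in W^{1,p}(\Omega)\cap L^\infty(\Omega)$ by testing \eqref{ot3t} by $f$ itself:
$$\io f\pst\,dz=\io \rho_\tau(\nuta^2)\nabla\ut\cdot\nabla f\,dz+\tau\io|\ut|^{p-2}\ut f\,dz.$$
Since $1<p\leq 2$ and $\nuta\leq(\nuta^2+\tau)^{1/2}$, one has $\rho_\tau(\nuta^2)|\nabla\ut|\leq(\nuta^2+\tau)^{(p-1)/2}+\beta$; Young's inequality with conjugate exponents $p/(p-1)$ and $p$ then bounds the first integral by $\varepsilon\io(\nuta^2+\tau)^{p/2}\,dz+c(\varepsilon)\|\nabla f\|_p^p+c\|\nabla f\|_1$, and using $(\nuta^2+\tau)^{p/2}\leq p\Psi_\tau(\nabla\ut)$ together with small-$\varepsilon$ absorption delivers \eqref{rue}. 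The remaining contribution $\tau\io|\ut|^{p-2}\ut f\,dz$ is dominated by $\tau\|f\|_\infty\io|\ut|^{p-1}\,dz$, which another application of Young absorbs into $a\tau\io|\ut|^p\,dz$.

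The estimate \eqref{ue} then follows at once: $\Psi_\tau(\xi)\geq\tfrac{1}{p}|\xi|^p$ combined with \eqref{rue} gives $\|\nabla\ut\|_p\leq c$, while testing \eqref{ot1t} by the constant $1$ (using the Neumann condition) yields $\tau\io\pst\,dz+a\io\ut\,dz=\io f\,dz$. The bound $\tau\io\pst^2\,dz\leq c$ from \eqref{rue} combined with Cauchy--Schwarz controls $\tau|\io\pst\,dz|$ by $c\,\tau^{1/2}|\Omega|^{1/2}\leq c$, hence $|\io\ut\,dz|\leq c$, and Poincar\'e--Wirtinger closes \eqref{ue}. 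For \eqref{ptb}, H\"older with conjugate exponents $(p+1)/p$ and $p+1$ produces
$$\io|\nabla\pst|^{\frac{2p}{p+1}}\,dz\leq\left(\io\frac{|\nabla\pst|^2}{1+q\nuta}\,dz\right)^{\frac{p}{p+1}}\left(\io(1+q\nuta)^p\,dz\right)^{\frac{1}{p+1}},$$
which is bounded by \eqref{rue} and \eqref{ue}; testing \eqref{ot3t} by $1$ yields $\io\pst\,dz=\tau\io|\ut|^{p-2}\ut\,dz$, uniformly bounded via \eqref{ue}, and a final Poincar\'e--Wirtinger step finishes the argument.

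The main obstacle is the $\tau$-uniform estimate on $\io f\pst\,dz$: the only $L^2$-type bound on $\pst$ produced directly by the energy identity carries a factor $\tau^{-1/2}$, so a naive $\varepsilon\pst^2+c(\varepsilon)f^2$ splitting diverges as $\tau\to 0$. The detour of testing the $p$-Poisson equation \eqref{ot3t} by $f$ is the essential device that shifts the burden onto the quantity $(\nuta^2+\tau)^{p/2}$, which is controlled by the available energy $\Psi_\tau(\nabla\ut)$; this is precisely where the assumption $f\in W^{1,p}(\Omega)$ is used.
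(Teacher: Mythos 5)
Your proof is correct and follows essentially the same route as the paper: test \eqref{ot1t} with $\pst$, convert $a\io\ut\pst\,dz$ into the $p$-Laplacian energy via \eqref{ot3t}, handle $\io f\pst\,dz$ by moving $\pst$ through \eqref{ot3t} onto $\nabla f$ (the key step where $f\in W^{1,p}(\Omega)$ enters), and close \eqref{ue}, \eqref{ptb} with mean-value control plus Poincar\'e and the weighted H\"older trick with $1+q\nuta$. The only cosmetic difference is your use of the convexity inequality $\nabla\Psi_\tau(\xi)\cdot\xi\geq\Psi_\tau(\xi)-\Psi_\tau(0)$ in place of the paper's explicit expansion of $\rho_\tau(\nuta^2)\nuta^2$, which yields the same bound.
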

 \begin{proof}
 	Use $\pst$ as a test function in \eqref{ot1t} to obtain
 	\begin{equation}\label{nf1}
 	\io D_\tau(\nabla\ut)\nabla\pst\cdot\nabla\pst dz+\tau\io\pst^2dz+a\io\ut\pst dz=\io f\pst dz.
 	\end{equation}
 	With the aid of \eqref{ot3t}, we evaluate the last integral on the left-hand side in the above equation as follows:
 	\begin{eqnarray}
 	\io\ut\pst dz&=&\io(\nuta^2+\tau)^{\frac{p-2}{2}}|\nabla\ut|^2dz+\beta\io\frac{\nuta^2}{(\nuta^2+\tau)^{\frac{1}{2}}}dz+\tau\io|\ut|^pdz\nonumber\\
 	&=&\io(\nuta^2+\tau)^{\frac{p}{2}}dz+\beta\io(\nuta^2+\tau)^{\frac{1}{2}}dz+\tau\io|\ut|^pdz\nonumber\\
 	&&-\io\tau(\nuta^2+\tau)^{\frac{p-2}{2}}dz-\beta\io\frac{\tau}{(\nuta^2+\tau)^{\frac{1}{2}}}dz\nonumber\\
 	&\geq &\io(\nuta^2+\tau)^{\frac{p}{2}}dz+\beta\io(\nuta^2+\tau)^{\frac{1}{2}}dz+\tau\io|\ut|^pdz-(\tau^{\frac{p}{2}}+\beta\tau^{\frac{1}{2}})|\Omega|.\label{nf2}
 	\end{eqnarray}
 	The last step is due to \eqref{r33}. As for the right-hand side term in \eqref{nf1}, we have
 	\begin{eqnarray}
 	\io f\pst dz&=&\io(\nuta^2+\tau)^{\frac{p-2}{2}}\nabla\ut\nabla fdz+\beta\io\frac{\nabla\ut\nabla f}{(\nuta^2+\tau)^{\frac{1}{2}}}dz+\tau\io |\ut|^{p-2} \ut f dz\nonumber\\
 	&\leq &\io(\nuta^2+\tau)^{\frac{p-1}{2}}|\nabla f|dz+\beta\|\nabla f\|_1+\tau\|f\|_p\|\ut\|_p^{p-1}\nonumber\\
 	&\leq &\|\nabla f\|_p\|(\nuta^2+\tau)^{\frac{1}{2}}\|_p^{p-1}+\beta\|\nabla f\|_1+\tau\|f\|_p\|\ut\|_p^{p-1}.\label{nf3}
 	\end{eqnarray}
 	Recall from \eqref{r22} that
 	\begin{equation}
 	D_\tau(\nabla\ut)\nabla\pst\cdot\nabla\pst\geq \frac{1}{1+q\nuta}|\nabla\pst|^2.
 	\end{equation}
 	Plug the above inequality, \eqref{nf2}, and \eqref{nf3} into \eqref{nf1}, apply Lemma \ref{elmen} appropriately in the resulting inequality, remember
 	\begin{equation}
 	\tau\leq 1,
 	\end{equation}  and thereby obtain \eqref{rue}.
 	
 	Integrate \eqref{ot1t} over $\Omega$ and use \eqref{rue} to yield
 	\begin{equation}
 	\left|a\io \ut dz\right|=\left|\io fdz-\tau\io\pst dz\right|\leq c.
 	\end{equation}
 	Subsequently, we can apply the Poincar\'{e} inequality to get
 	\begin{eqnarray}
 	\|\ut\|_{p^*}&\leq &\left\|\ut-\frac{1}{|\Omega|}\io \ut dz\right\|_{p^*}+\frac{1}{|\Omega|^{1-\frac{1}{p}}}\left|\io \ut dz\right|\nonumber\\
 	&\leq &c\|\nabla\ut\|_{p}+\frac{1}{|\Omega|^{1-\frac{1}{p}}}\left|\io \ut dz\right|\leq c.\label{nf5}
 	\end{eqnarray}
 	Thus \eqref{ue} follows. 
 	
 	To see \eqref{ptb}, we integrate \eqref{ot10} to get
 	\begin{equation}
 	\left|\io\pst dz\right|=\tau\left|\io|\ut|^{p-2}\ut dz\right|\leq c.
 	\end{equation}
 	We estimate from  Poincar\'{e}'s inequality that
 	\begin{eqnarray}
 	\|\pst\|_{2p}&\leq &\left\|\pst-\frac{1}{|\Omega|}\io\pst dz\right\|_{2p}+c\nonumber\\
 	&\leq &c\left(\io|\nabla\pst|^{\frac{2p}{1+p}}dz\right)^{\frac{1+p}{2p}}+c\nonumber\\
 	&=&c\left(\io(1+q\nuta)^{\frac{p}{1+p}}\frac{1}{(1+q\nuta)^{\frac{p}{1+p}}}|\nabla\pst|^{\frac{2p}{1+p}}dz\right)^{\frac{1+p}{2p}}+c\nonumber\\
 	&\leq &c\|1+q\nuta\|_p^{\frac{1}{2}}\left(\io\frac{1}{1+q\nuta}|\nabla\pst|^{2}dz\right)^{\frac{1}{2}}+c\leq c.\label{nf4}
 	\end{eqnarray}
 	The proof is complete.
 \end{proof}
 \begin{clm}\label{utb} We have
 	\begin{equation*}
 	\|\ut\|_\infty\leq c.
 	\end{equation*}
 	
 \end{clm}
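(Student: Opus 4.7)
The plan is to run a De Giorgi level-set iteration on equation \eqref{ot3t}, using the uniform $L^{2p}$-bound on $\pst$ from \eqref{nf4} and the uniform $L^{p^*}$-bound on $\ut$ from \eqref{nf5} as the $\tau$-independent inputs. For a truncation height $M>0$ to be chosen, set $M_k := M(2-2^{-k})$ and $A_k^+ := \{z\in\Omega : \ut(z)>M_k\}$, and test \eqref{ot3t} with the admissible function $(\ut-M_k)_+$.

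The zeroth-order term $\tau\io|\ut|^{p-2}\ut(\ut-M_k)_+\,dz$ is nonnegative on $A_k^+$ (for $M_k\geq 0$) and may be discarded. A case split on $\{\nuta^2\leq\tau\}$ versus $\{\nuta^2>\tau\}$ together with $\tau\leq 1$ yields the pointwise coercivity $\rho_\tau(\nuta^2)\nuta^2 \geq c|\nabla\ut|^p - c\tau^{p/2}$, so the energy inequality becomes
\begin{equation*}
\io|\nabla(\ut-M_k)_+|^p\,dz \leq c\io|\pst|(\ut-M_k)_+\,dz + c|A_k^+|.
\end{equation*}
Hölder's inequality with the uniform $L^{2p}$-bound on $\pst$, the two-dimensional Sobolev inequality $\|v\|_{p^*}\leq c(\|\nabla v\|_p+\|v\|_1)$ of Lemma \ref{linterp} applied to $v=(\ut-M_k)_+$, and the pointwise estimate $\|(\ut-M_k)_+\|_1\leq \|\ut\|_{p^*}|A_k^+|^{1-1/p^*}$ lead, after absorbing the gradient term via Young's inequality (Lemma \ref{elmen}), to a bound $\|\nabla(\ut-M_k)_+\|_p^p\leq c|A_k^+|^\mu$ for some $\mu=\mu(p)>0$. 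A second application of Hölder's and Sobolev's inequalities, combined with the standard observation $|A_{k+1}^+|(M_{k+1}-M_k)^p \leq \|(\ut-M_k)_+\|_p^p$, delivers the recursion
\begin{equation*}
|A_{k+1}^+| \leq C\,2^{kp}\,M^{-p}\,|A_k^+|^{1+\alpha},
\end{equation*}
with $\alpha=\alpha(p)>0$ (the key algebra reduces to $(p-1)(p+5)>0$, which is exactly where $p>1$ enters) and $C$ depending only on $p$, $\beta$, $a$, $\|f\|_\infty$, and $|\Omega|$.

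Lemma \ref{ynb} then forces $|A_\infty^+|=0$, i.e., $\ut\leq 2M$ almost everywhere, provided $M$ is chosen large enough that the smallness condition $|A_0^+|\leq C^{-1/\alpha}(2^p)^{-1/\alpha^2}$ holds; taking $M\gtrsim\|\ut\|_{p^*}$ suffices because $|A_0^+|\leq M^{-p^*}\|\ut\|_{p^*}^{p^*}$ and $\|\ut\|_{p^*}$ is bounded independently of $\tau$ by \eqref{nf5}. Running the same argument with $-\ut$ in place of $\ut$ gives the matching lower bound. The main obstacle, and the step I would spend the most care on, is the bookkeeping required to keep every constant in the iteration independent of $\tau$; the linear term $c|A_k^+|$ coming from the $\tau^{p/2}$ defect in the coercivity is the delicate one, but it is upgraded to the required $|A_k^+|^{1+p/2}$ exponent by the Hölder-Sobolev interpolation in the second step. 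Note that only $p>1$ is needed here; the stronger restriction $p>4/3$ is reserved for the gradient $L^\infty$ bound of Section \ref{sec4}.
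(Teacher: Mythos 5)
Your proposal is correct and follows essentially the same route as the paper: a De Giorgi iteration obtained by testing \eqref{ot3t} with the truncations $(\ut-K_{n+1})^+$, extracting the $p$-coercivity of $\rho_\tau$ up to a $\tau^{p/2}$ defect, and closing the recursion with the uniform $L^{2p}$ bound \eqref{nf4} on $\pst$ and the Sobolev inequality \eqref{otn19}. The only cosmetic difference is that you iterate on the measures $|A_k^+|$ of the superlevel sets while the paper iterates on the quantities $Y_n=\|(\ut-K_n)^+\|_1$; both are standard, interchangeable forms of the argument.
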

 Observe that this claim is a consequence of the Sobolev embedding theorem if $p>2$.
 \begin{proof}
 	Without loss of generality, we assume
 	\begin{equation*}
 	\|\ut\|_\infty=\|\ut^+\|_\infty.
 	\end{equation*}
 	Let $K>0$ be selected as below. Define
 	\begin{equation}\label{utbk}
 	K_n=K-\frac{K}{2^n},\ \ \ n=0,1,2,\cdots.
 	\end{equation}	
 	We use $(\ut-K_{n+1})^+$ as a test function in \eqref{ot3t} to obtain
 	\begin{equation}\label{ell5}
 	\io \rho_\tau(\nuta^2)\nabla \ut\cdot\nabla (\ut-K_{n+1})^+dz+\tau\io |\ut|^{p-2}\ut(\ut-K_{n+1})^+dz=\io \pst(\ut-K_{n+1})^+dz.
 	\end{equation}
 	Note that
 	\begin{eqnarray*}
 		|\ut|^{p-2}\ut(\ut-K_{n+1})^+&\geq&\left[(\ut-K_{n+1})^+\right]^p,\\
 		\rho_\tau(\nuta^2)\nabla \ut\cdot\nabla (\ut-K_{n+1})^+&=&\rho_\tau( \nuta^2)|\nabla (\ut-K_{n+1})^+|^2.
 	\end{eqnarray*}
 	Therefore, 
 	\begin{equation}\label{r1}
 	\io \rho_\tau(\nuta^2)|\nabla (\ut-K_{n+1})^+|^2dz\leq \io \pst(\ut-K_{n+1})^+dz
 	\end{equation}
 	Set
 	\begin{equation*}
 	U_n=\{\ut\geq K_n\}.
 	\end{equation*}
 	Remember that $p<2$. With this in mind, we can derive from \eqref{r2} and \eqref{r1} that
 	\begin{eqnarray*}
 		\io|\nabla(\ut-K_{n+1})^+|^pdz&=&\int_{U_{n+1}}|\nabla(\ut-K_{n+1})^+|^pdz\nonumber\\
 		&\leq &\int_{U_{n+1}}(|\nabla(\ut-K_{n+1})^+|^2+\tau)^{\frac{p}{2}}dz\nonumber\\
 		&=&\int_{U_{n+1}}(|\nabla(\ut-K_{n+1})^+|^2+\tau)^{\frac{p}{2}-1}\left(|\nabla(\ut-K_{n+1})^+|^2+\tau\right)dz\nonumber\\
 		&=&\int_{U_{n+1}}(|\nabla(\ut-K_{n+1})^+|^2+\tau)^{\frac{p-2}{2}}|\nabla(\ut-K_{n+1})^+|^2dz\nonumber\\
 		&&+\tau\int_{	U_{n+1}}(|\nabla(\ut-K_{n+1})^+|^2+\tau)^{\frac{p-2}{2}}dz\nonumber\\
 		&\leq &
 		\int_{U_{n+1}}\rho_\tau(\nuta^2)|\nabla (\ut-K_{n+1})^+|^2dz+\tau^{\frac{p}{2}}|U_{n+1}|\nonumber
 		\\
 		&\leq &\int_{U_{n+1}} \pst(\ut-K_{n+1})^+dz+\tau^{\frac{p}{2}}|U_{n+1}|.
 	\end{eqnarray*}
 	We estimate from \eqref{otn19} and \eqref{ptb} that
 	\begin{eqnarray*}
 		\io \pst(\ut-K_{n+1})^+dz&\leq &\|(\ut-K_{n+1})^+\|_{\frac{2p}{2-p}}\left(\int_{	U_{n+1}}|\pst|^{\frac{2p}{3p-2}}dz\right)^{\frac{3p-2}{2p}}\nonumber\\
 		&\leq &c(\|\nabla(\ut-K_{n+1})^+\|_p+\|(\ut-K_{n+1})^+\|_1)\left(\int_{	U_{n+1}}|\pst|^{\frac{2p}{3p-2}}dz\right)^{\frac{3p-2}{2p}}\nonumber\\
 		&\leq & \frac{1}{2}\io|\nabla(\ut-K_{n+1})^+|^pdz+c\left(\int_{	U_{n+1}}|\pst|^{\frac{2p}{3p-2}}dz\right)^{\frac{3p-2}{2(p-1)}}\nonumber\\
 		&&+c\|(\ut-K_{n+1})^+\|_1\nonumber\\
 		&\leq &\frac{1}{2}\io|\nabla(\ut-K_{n+1})^+|^pdz+c|U_{n+1}|^{\frac{3}{2}}+c\|(\ut-K_{n+1})^+\|_1.
 	\end{eqnarray*}
 	Combining the preceding two estimates yields
 	\begin{equation*}
 	\io|\nabla(\ut-K_{n+1})^+|^pdz\leq c|U_{n+1}|+c\|(\ut-K_{n+1})^+\|_1.
 	\end{equation*}
 	Set
 	\begin{equation*}
 	Y_n=\|(\ut-K_{n})^+\|_1.
 	\end{equation*}
 	We derive from the Sobolev embedding theorem that
 	\begin{eqnarray}
 	Y_{n+1}&\leq &\|(\ut-K_{n+1})^+\|_{\frac{2p}{2-p}}|U_{n+1}|^{\frac{3p-2}{2p}}\nonumber\\
 	&\leq &\left\|(\ut-K_{n+1})^+-\frac{1}{|\Omega|}\io(\ut-K_{n+1})^+dz\right\|_{\frac{2p}{2-p}}|U_{n+1}|^{\frac{3p-2}{2p}}+cY_{n+1}|U_{n+1}|^{\frac{3p-2}{2p}}\nonumber\\
 	&\leq&c\|\nabla(\ut-K_{n+1})^+\|_p|U_{n+1}|^{\frac{3p-2}{2p}}+cY_{n+1}|U_{n+1}|^{\frac{3p-2}{2p}}\nonumber\\
 	&\leq&c|U_{n+1}|^{\frac{3}{2}}+cY_{n+1}^{\frac{1}{p}}|U_{n+1}|^{\frac{3p-2}{2p}}+cY_{n+1}|U_{n+1}|^{\frac{3p-2}{2p}}\nonumber\\
 	&\leq&c|U_{n+1}|^{\frac{3}{2}}+cY_{n+1}^{\frac{1}{p}}|U_{n+1}|^{\frac{3p-2}{2p}}+cY_{n+1}|U_{n+1}|^{\frac{1}{2}}\nonumber\\
 	&\leq&c|U_{n+1}|^{\frac{3}{2}}+cY_{n}^{\frac{1}{p}}|U_{n+1}|^{\frac{3p-2}{2p}}+cY_{n}|U_{n+1}|^{\frac{1}{2}}.\label{utb3}
 	\end{eqnarray}
 	The last step is due to the fact that the sequence $\{Y_{n}\}$ is decreasing.
 	Note that
 	\begin{equation*}
 	Y_n=\io(\ut-K_{n})^+dz\geq(K_{n+1}-K_n)|U_{n+1}|=\frac{K}{2^{n+1}}|U_{n+1}|.
 	\end{equation*}
 	With this in mind, we derive from \eqref{utb3} that
 	\begin{equation*}
 	Y_{n+1}\leq \frac{c\sqrt{2}^{3n}}{g(K)}Y_n^{\frac{3}{2}},
 	\end{equation*}
 	where $g(K)=\min\{K^{\frac{3}{2}}, K^{\frac{3p-2}{2p}}, K^{\frac{1}{2}}\}$.
 	We choose $K$ so that
 	$$Y_0=\io\ut^+dz\leq cg^2(K).$$
 	By Lemma \ref{ynb}, we have
 	\begin{equation*}
 	\ut\leq K.
 	\end{equation*}
 	The proof is complete.
 \end{proof}
 \begin{clm}\label{pstb}We have
 	\begin{equation*}
 	\|\pst\|_\infty\leq c.
 	\end{equation*}
 	
 \end{clm}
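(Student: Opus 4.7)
The plan is to mimic the De Giorgi iteration from Claim \ref{utb}, but applied to $\pst$ using equation \eqref{ot1t}. Without loss of generality I assume $\|\pst\|_\infty = \|\pst^+\|_\infty$, set $K_n = K - K/2^n$ with $K > 0$ to be chosen later, and let $V_n = \{\pst \geq K_n\}$. Testing \eqref{ot1t} with $(\pst - K_{n+1})^+$, the term $\tau\io \pst(\pst - K_{n+1})^+ dz$ is non-negative (since $K_{n+1} \geq 0$), while the contributions of $a\ut$ and $f$ are absorbed into the right-hand side using the uniform bound $\|\ut\|_\infty \leq c$ from Claim \ref{utb} together with $f \in L^\infty$. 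Invoking the ellipticity \eqref{r22}, this yields
$$\io \frac{|\nabla(\pst - K_{n+1})^+|^2}{1+q\nuta}\, dz \leq c \int_{V_{n+1}}(\pst - K_{n+1})^+\, dz.$$

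The main obstacle is the degenerate weight $\frac{1}{1+q\nuta}$, which rules out a direct $L^2$-type Caccioppoli estimate. I circumvent this by Cauchy--Schwarz:
$$\left(\int_{V_{n+1}} |\nabla(\pst-K_{n+1})^+|\, dz\right)^{\!2} \leq \io \frac{|\nabla(\pst-K_{n+1})^+|^2}{1+q\nuta}\, dz \cdot \int_{V_{n+1}} (1+q\nuta)\, dz,$$
where H\"older's inequality and the uniform bound $\|\nabla\ut\|_p \leq c$ from \eqref{ue} control the second factor by $c|V_{n+1}|^{1-1/p}$ (valid once $|V_{n+1}| \leq 1$, which is ensured for $K$ large). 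Chaining this with the borderline two-dimensional Sobolev embedding $W^{1,1}(\Omega) \hookrightarrow L^2(\Omega)$ from \eqref{otn19} and the H\"older estimate $\int_{V_{n+1}}(\pst-K_{n+1})^+\, dz \leq \|(\pst - K_{n+1})^+\|_2 |V_{n+1}|^{1/2}$, and absorbing the lower-order $Y_{n+1}$ contributions using the smallness of $|V_{n+1}|$ for large $K$, should yield
$$Y_{n+1} := \|(\pst - K_{n+1})^+\|_1 \leq c|V_{n+1}|^{2-1/p}.$$

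Finally, the Chebyshev-type bound $|V_{n+1}| \leq (2^{n+1}/K) Y_n$ produces the recursion
$$Y_{n+1} \leq \frac{c\, 2^{n(2-1/p)}}{K^{2-1/p}}\, Y_n^{2-1/p}.$$
Since $p > 1$ forces $2 - 1/p > 1$, Lemma \ref{ynb} applies: the uniform bound $Y_0 \leq \|\pst^+\|_1 \leq c$ from \eqref{ptb} combined with $K$ chosen large enough makes the smallness hypothesis on $Y_0$ hold, so $Y_n \to 0$ and hence $\pst \leq K$. The corresponding lower bound $\pst \geq -K'$ follows from a symmetric argument after replacing $\pst$ by $-\pst$. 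The hard part is the Cauchy--Schwarz trade that converts the degenerate $L^2$ gradient estimate into a genuine $L^1$ one at the cost of the favourable factor $|V_{n+1}|^{1-1/p}$; the assumption $p > 1$ is precisely what makes this exchange produce an iteration exponent strictly greater than $1$, thereby allowing Lemma \ref{ynb} to close the argument in spite of the degeneracy of $D_\tau(\nabla\ut)$.
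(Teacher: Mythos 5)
Your proof is correct; it follows the same De Giorgi skeleton as the paper (test \eqref{ot1t} with $(\pst-K_{n+1})^+$, use the degenerate ellipticity \eqref{r22}, trade the weight $\frac{1}{1+q\nuta}$ against the uniform bound $\|\nabla\ut\|_p\leq c$ via H\"{o}lder, then Chebyshev and Lemma \ref{ynb}), but the quantitative route through the Sobolev step is genuinely different. The paper splits the weight with exponent $\frac{p}{p+1}$, lands in $W^{1,\frac{2p}{p+1}}(\Omega)$, and invokes the embedding into $L^{2p}$, producing the recursion exponent $1+\frac{p-1}{2p}$; you split with exponent $\frac12$ by Cauchy--Schwarz, land in $W^{1,1}(\Omega)$, and use the borderline two-dimensional embedding $W^{1,1}\hookrightarrow L^2$, which yields the larger exponent $2-\frac1p$ at the cost of two extra smallness requirements on $|V_{n+1}|$ (to control $\int_{V_{n+1}}(1+q\nuta)\,dz$ by $|V_{n+1}|^{1-1/p}$ and to absorb the lower-order term $cY_{n+1}|V_{n+1}|^{1/2}$); both are legitimately secured by taking $K$ large, since $|V_{n+1}|\leq\frac{2}{K}\|\pst^+\|_1\leq\frac{c}{K}$ by \eqref{ptb}. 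The paper's choice avoids these absorption steps by keeping the $L^1$ remainder explicit and using that $\{Z_n\}$ is bounded and decreasing, whereas yours is marginally more elementary in its function-space inputs (only the $L^1$ gradient and the critical embedding) and gives a faster iteration; either closes the argument because $p>1$ makes the exponent exceed $1$.
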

 \begin{proof} Without loss of generality, we assume
 	\begin{equation*}
 	\|\pst\|_\infty=\|\pst^+\|_\infty.
 	\end{equation*}
 	Let $K, K_n$ be given as in \eqref{utbk}.
 	We use $(\pst-K_{n+1})^+$ as a test function in \eqref{ot1t} to obtain
 	\begin{equation}\label{ell3}
 	\io D_\tau(\ut)\nabla \pst\cdot\nabla (\pst-K_{n+1})^+dz+\tau\io \pst(\pst-K_{n+1})^+dz=\io (f-a\ut)(\pst-K_{n+1})^+dz.
 	\end{equation}
 	This together with \eqref{r22} implies
 	\begin{eqnarray}\label{psb}
 	\io\frac{1}{1+q\nuta}|\nabla (\pst-K_{n+1})^+|^2 dz&\leq&\io(f-a\ut)(\pst-K_{n+1})^+ dz\nonumber\\
 	&\leq &c\io(\pst-K_{n+1})^+ dz\equiv cZ_{n+1}	.
 	\end{eqnarray}
 	Remember that $2p>2$. We derive from the Sobolev embedding theorem and \eqref{psb} that
 	\begin{eqnarray}
 	\lefteqn{\left(\io\left[(\pst-K_{n+1})^+\right]^{2p}dz\right)^{\frac{1}{2p}}}\nonumber\\
 	&\leq&2\left(\io\left[(\pst-K_{n+1})^+-\frac{1}{|\Omega|}\io(\pst-K_{n+1})^+dz\right]^{2p}dz\right)^{\frac{1}{2p}}+cZ_{n+1}\nonumber\\
 	&\leq &c\left(\io|\nabla(\pst-K_{n+1})^+|^{\frac{2p}{p+1}}dz\right)^{\frac{p+1}{2p}}+cZ_{n+1}\nonumber\\
 	&\leq &c\left(\io(1+q\nuta)^{\frac{p}{p+1}}\frac{1}{(\io(1+q\nuta)^{\frac{p}{p+1}}}|\nabla(\pst-K_{n+1})^+|^{\frac{2p}{p+1}}dz\right)^{\frac{p+1}{2p}}+cZ_{n+1}\nonumber\\
 	&\leq &c\left(\io\frac{1}{1+q\nuta}|\nabla (\pst-K_{n+1})^+|^2 dz\right)^{\frac{1}{2}}\|1+q\nuta\|_{p}^{\frac{1}{2}}+cZ_{n+1}\nonumber\\
 	&\leq &cZ_{n+1}^{\frac{1}{2}}+cZ_{n+1}\leq cZ_{n+1}^{\frac{1}{2}} .\label{r111}
 	\end{eqnarray}
 	The last step is due to the fact that the sequence $\{Z_{n}\}$ is bounded.
 	As before, we have
 	\begin{equation}\label{r112}
 	Z_{n}=	\io(\pst-K_{n})^+dz\geq(K_{n+1}-K_n)|\{\pst\geq K_{n+1}\}|=\frac{K}{2^{n+1}}|\{\pst\geq K_{n+1}\}|.
 	\end{equation}
 	Using H\"{o}lder's inequality, \eqref{r111}, \eqref{r112}, and the fact that $\{Z_{n}\}$ is decreasing yields
 	\begin{eqnarray*}
 		Z_{n+1}&\leq &\left(\io\left[(\pst-K_{n+1})^+\right]^{2p}dz\right)^{\frac{1}{2p}}|\{\pst\geq K_{n+1}\}|^{1-\frac{1}{2p}}\nonumber\\
 		&\leq &cZ_{n+1}^{\frac{1}{2}}\left(\frac{2^{n+1}}{K}\right)^{1-\frac{1}{2p}}	Z_{n}^{1-\frac{1}{2p}}\nonumber\\
 		&\leq &\frac{c2^{(1-\frac{1}{2p})n}}{K^{1-\frac{1}{2p}}}Z_{n}^{1+\frac{p-1}{2p}}.
 	\end{eqnarray*}
 	By Lemma \ref{ynb}, if we choose $K$ so that
 	\begin{equation*}
 	Z_0=\io (\pst)^+dz\leq cK^\frac{2p-1}{p-1},
 	\end{equation*}
 	then we have
 	\begin{equation*}
 	\pst\leq K.
 	\end{equation*}
 	
 \end{proof}
 \begin{clm}\label{gup} The sequence $\{\nabla\ut\}$ is precompact in $W^{1,s}(\Omega)$ for each $s\in [1, p)$.
 \end{clm}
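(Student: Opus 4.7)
The plan is to combine the weak and a.e.\ convergences supplied by Claims~\ref{utb} and~\ref{pstb} with a Minty--Browder monotonicity argument, and then upgrade to strong gradient convergence using Lemma~\ref{lplap}(ii) and Vitali's theorem.

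First, from the bounds \eqref{ue}, \eqref{ptb}, Claim~\ref{utb} and Claim~\ref{pstb}, Rellich--Kondrachov and a diagonal extraction produce a subsequence along which
\begin{equation*}
\ut\rightharpoonup u\ \ \mbox{in}\ W^{1,p}(\Omega),\qquad \pst\rightharpoonup v\ \ \mbox{in}\ W^{1,\frac{2p}{p+1}}(\Omega),
\end{equation*}
with strong $L^q$ and a.e.\ convergence for all $q<\infty$ (resp.\ $q<2p$), and with $u,v\in L^\infty(\Omega)$. Since $|\rt(\nuta^2)\nabla\ut|\le c(|\nabla\ut|^{p-1}+1)$ uniformly in $\tau\in(0,1]$, a further subsequence gives
$\rt(\nuta^2)\nabla\ut \rightharpoonup \chi$ in $L^{p/(p-1)}(\Omega)^2$ for some $\chi$.

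Second, I test \eqref{ot3t} with $\ut$ to obtain
\begin{equation*}
\io\rt(\nuta^2)|\nabla\ut|^2\,dz+\tau\io|\ut|^p\,dz=\io\pst\ut\,dz,
\end{equation*}
whose right-hand side converges to $\io vu\,dz$ by the strong $L^2$ convergence of $\ut$ and Claim~\ref{pstb}. Passing to the limit in \eqref{ot3t} against any $\varphi\in W^{1,p}(\Omega)\cap L^\infty(\Omega)$ gives $\io\chi\cdot\nabla\varphi\,dz=\io v\varphi\,dz$; choosing $\varphi=u$ yields $\io\chi\cdot\nabla u\,dz=\io vu\,dz$, hence
\begin{equation*}
\lim_{\tau\to 0}\io\rt(\nuta^2)\nabla\ut\cdot\nabla\ut\,dz=\io\chi\cdot\nabla u\,dz.
\end{equation*}
By convexity of $\Psi_\tau$ (see \eqref{pht}), the map $\xi\mapsto\rt(|\xi|^2)\xi=\nabla\Psi_\tau(\xi)$ is monotone, so for any smooth non-vanishing $\eta$ one has $\io(\rt(\nuta^2)\nabla\ut-\rt(|\eta|^2)\eta)\cdot(\nabla\ut-\eta)dz\ge0$. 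Letting $\tau\to 0$ and then letting $\eta$ range over smooth approximations to $\nabla u$, the standard Minty device identifies $\chi=|\nabla u|^{p-2}\nabla u+\beta h$ a.e., for some $h\in(L^\infty(\Omega))^2$ with $h\in\partial\Phi(\nabla u)$.

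Finally, the above identification combined with the energy identity implies
\begin{equation*}
\io\bigl(\rt(\nuta^2)\nabla\ut-\rt(\nua^2)\nabla u\bigr)\cdot(\nabla\ut-\nabla u)\,dz\to 0.
\end{equation*}
The singular $1$-Laplacian summand contributes a nonnegative integrand (monotonicity of $\xi\mapsto(|\xi|^2+\tau)^{-1/2}\xi$), so dropping it and invoking a regularised form of Lemma~\ref{lplap}(ii) controls $(1+\nuta^2+\nua^2)^{(p-2)/2}|\nabla\ut-\nabla u|^2$. Together with the uniform $W^{1,p}$ bound and H\"older's inequality, this forces $\nabla\ut\to\nabla u$ in measure on $\Omega$. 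Uniform integrability of $\{|\nabla\ut|^p\}$ and Vitali's theorem then yield $\nabla\ut\to\nabla u$ strongly in $L^s(\Omega)^2$ for every $s\in[1,p)$, giving the stated precompactness. The main obstacle is the Minty identification in Step~3: the singular piece $\beta\nabla\ut/(\nuta^2+\tau)^{1/2}$ converges only weak-$*$ in $L^\infty$, so the monotonicity inequality must be tested against non-vanishing smooth $\eta$ and the subdifferential characterisation of $h$ on $\{\nabla u=0\}$ recovered only afterwards by approximation.
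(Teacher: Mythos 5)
Your proof is correct and follows essentially the same route as the paper: the regularised strong-monotonicity inequality of Lemma~\ref{lplap}(ii) for the $p$-Laplacian part, nonnegativity (hence disposability) of the singular $1$-Laplacian part, and convergence of the monotonicity pairing $\io\bigl(\rho_\tau(\nuta^2)\nabla\ut-\rho_\tau(\nua^2)\nabla u\bigr)\cdot(\nabla\ut-\nabla u)\,dz$ to zero via the equation, followed by the H\"older/Vitali upgrade to strong convergence in $L^s$ for $s<p$. The only substantive difference is that your Minty--Browder identification of the weak flux limit $\chi$ is superfluous for this claim --- the cancellation already follows from the energy identity together with the weak convergence of $\rho_\tau(\nuta^2)\nabla\ut$ and the strong $L^{p'}$ convergence of $\rho_\tau(\nua^2)\nabla u$ --- and the paper accordingly postpones the identification of $h\in\partial\Phi(\nabla u)$ until after this claim, where it drops out of the a.e.\ convergence of $\nabla\ut$ just established.
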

 \begin{proof} We need a version of Lemma \ref{lplap} for our approximation \cite{LI}. To this end,
 	we compute, for $\xi,\eta\in \mathbb{R}^2$, that
 	\begin{eqnarray}
 	\lefteqn{(|\eta|^2+\tau)^{\frac{p-2}{2}}\eta-	(|\xi|^2+\tau)^{\frac{p-2}{2}}\xi}\nonumber\\
 	&=&\int_0^1\frac{d}{dt}\left[(|\xi+t(\eta-\xi)|^2+\tau)^{\frac{p-2}{2}}(\xi+t(\eta-\xi))\right]dt\nonumber\\
 	&=&(\eta-\xi)\int_0^1(|\xi+t(\eta-\xi)|^2+\tau)^{\frac{p-2}{2}}dt\nonumber\\
 	&&+(p-2)\int_0^1(|\xi+t(\eta-\xi)|^2+\tau)^{\frac{p-4}{2}}(\eta-\xi)\cdot(\xi+t(\eta-\xi))(\xi+t(\eta-\xi))dt.
 	\end{eqnarray}
 	Subsequently,
 	\begin{eqnarray}
 	\lefteqn{\left((|\eta|^2+\tau)^{\frac{p-2}{2}}\eta-	(|\xi|^2+\tau)^{\frac{p-2}{2}}\xi\right)\cdot(\eta-\xi)}\nonumber\\
 	&=&|\eta-\xi|^2\int_0^1(|\xi+t(\eta-\xi)|^2+\tau)^{\frac{p-2}{2}}dt\nonumber\\
 	&&+(p-2)\int_0^1(|\xi+t(\eta-\xi)|^2+\tau)^{\frac{p-4}{2}}\left[(\eta-\xi)\cdot(\xi+t(\eta-\xi))\right]^2dt.\label{rev1}
 	\end{eqnarray}
 	The last integral has the estimate
 	\begin{eqnarray}
 	0&\leq &\int_0^1(|\xi+t(\eta-\xi)|^2+\tau)^{\frac{p-4}{2}}\left[(\eta-\xi)\cdot(\xi+t(\eta-\xi))\right]^2dt\nonumber\\
 	&\leq &|\eta-\xi|^2\int_0^1(|\xi+t(\eta-\xi)|^2+\tau)^{\frac{p-2}{2}}dt.
 	\end{eqnarray}
 	Use this in \eqref{rev1} and keep in mind the fact that $\tau\in (0,1), p\in (1,2)$ to derive
 	\begin{eqnarray}
 	\lefteqn{\left((|\eta|^2+\tau)^{\frac{p-2}{2}}\eta-	(|\xi|^2+\tau)^{\frac{p-2}{2}}\xi\right)\cdot(\eta-\xi)}\nonumber\\
 	&\geq &(p-1)|\eta-\xi|^2\int_0^1(|\xi+t(\eta-\xi)|^2+\tau)^{\frac{p-2}{2}}dt
 	\nonumber\\
 	&\geq&(p-1)\left(\int_0^1(|\xi+t(\eta-\xi)|^2+\tau)dt\right)^{\frac{p-2}{2}}|\eta-\xi|^2\nonumber\\
 	&\geq&(p-1)\left(1+|\xi|^2+|\eta|^2\right)^{\frac{p-2}{2}}|\eta-\xi|^2.\label{rev2}
 	\end{eqnarray}
 	Here we have used the fact that the function $s^{\frac{p-2}{2}}$ is convex. 
 	Obviously, we only have
 	\begin{equation}
 	\left((|\eta|^2+\tau)^{-\frac{1}{2}}\eta-	(|\xi|^2+\tau)^{-\frac{1}{2}}\xi\right)\cdot(\eta-\xi)\geq 0\ \ \mbox{for $\xi,\eta\in \mathbb{R}^2$.}\label{rev3}
 	\end{equation}
 	In view of \eqref{ue} and Claim \ref{utb}, we may assume
 	\begin{equation}\label{ucon}
 	u_\tau\rightarrow u\ \ \mbox{weakly in $W^{1,p}(\Omega)$ and strongly in $L^q(\Omega)$ for each $q\geq 1$ as $\tau\rightarrow 0$}
 	\end{equation}
 	(pass to a subsequence if necessary). From here on, the limit is always taken as $\tau\rightarrow 0$ (possibly) along a suitable subsequence.
 	It follows from \eqref{rev2} and \eqref{rev3} that
 	\begin{eqnarray}
 	\lefteqn{\left(\rho_{\tau}(|\nabla u_{\tau}|^2)\nabla u_{\tau}-\rho_{\tau}(|\nabla u|^2)	\nabla u\right)\cdot(\nabla u_{\tau}-\nabla u)}\nonumber\\
 	&\geq&(p-1)\left(1+|\nabla u_{\tau}|^2+|\nabla u|^2\right)^{\frac{p-2}{2}}|\nabla u_{\tau}-\nabla u|^2.\label{rev4}
 	\end{eqnarray}
 	Note that
 	\begin{equation}\label{r3}
 	\rho_{\tau}(|\nabla u|^2)\nabla u=(|\nabla u|^2+\tau)^{\frac{p-2}{2}}\nabla u+\beta(|\nabla u|^2+\tau)^{-\frac{1}{2}}\nabla u=0
 	\ \ \mbox{on $\{|\nabla u|=0\}$}.
 	\end{equation}
 	If we define $\rho(|\nabla u|^2)\nabla u$ to be $0$ on the set $\{|\nabla u|=0\}$, then we have
 	\begin{equation}\label{rcon}
 	\rho_{\tau}(|\nabla u|^2)\nabla u\rightarrow\rho(|\nabla u|^2)\nabla u\ \ \mbox{stronly in
 		$\left(W^{1, p^\prime}(\Omega)\right)^2$}.
 	\end{equation}
 	To see this, we first verify  that
 	\begin{eqnarray}
 	\rho_{\tau}(|\nabla u|^2)\nabla u
 	&\rightarrow &	|\nabla u|^{p-2}\nabla u+\beta	|\nabla u|^{-1}\nabla u=	\rho(|\nabla u|^2)\nabla u\ \ \mbox{a.e. on $\Omega$}.
 	\end{eqnarray}
 	Then check
 	\begin{equation}
 	\left|\rho_{\tau}(|\nabla u|^2)\nabla u\right|\leq |\nabla u|^{p-1}+\beta.
 	\end{equation}
 	Hence \eqref{rcon} follows from the Dominated Convergence Theorem.
 	
 	Keeping \eqref{rcon} in mind, we derive from \eqref{rev4} that
 	\begin{eqnarray}
 	\lefteqn{\io\left(\rho_{\tau}(|\nabla u_{\tau}|^2)\nabla u_{\tau}-\rho(|\nabla u|^2)	\nabla u\right)\cdot(\nabla u_{\tau}-\nabla u)dz}\nonumber\\
 	&=&\io\left(\rho_{\tau}(|\nabla u_{\tau}|^2)\nabla u_{\tau}-\rho_{\tau}(|\nabla u|^2)	\nabla u\right)\cdot(\nabla u_{\tau}-\nabla u)dz\nonumber\\
 	&& +\io\left(\rho_{\tau}(|\nabla u|^2)\nabla u-\rho(|\nabla u|^2)	\nabla u\right)\cdot(\nabla u_{\tau}-\nabla u)dz\nonumber\\
 	&\geq&(p-1)\io\left(1+|\nabla u_{\tau}|^2+|\nabla u|^2\right)^{\frac{p-2}{2}}|\nabla u_{\tau}-\nabla u|^2dz\nonumber\\
 	&& +\io\left(\rho_{\tau}(|\nabla u|^2)\nabla u-\rho(|\nabla u|^2)	\nabla u\right)\cdot(\nabla u_{\tau}-\nabla u)dz.\label{rev5}
 	\end{eqnarray}
 	On the other hand, we have from \eqref{ot3t} that
 	\begin{eqnarray}
 	\lefteqn{\io\left(\rho_{\tau}(|\nabla u_{\tau}|^2)\nabla u_{\tau}-\rho(|\nabla u|^2)	\nabla u\right)\cdot(\nabla u_{\tau}-\nabla u)dz}\nonumber\\
 	&=&\io( v_{\tau}-\tau|u_{\tau}|^{p-2}u_{\tau})( u_{\tau}- u)dz-\io\rho(|\nabla u|^2)	\nabla u\cdot(\nabla u_{\tau}-\nabla u)dz\nonumber\\
 	&\rightarrow & 0.
 	\end{eqnarray}
 	The last step is due to Claim \ref{pstb}, \eqref{ucon}, and the fact that $\left|\rho(|\nabla u|^2)	\nabla u\right|\in L^{p^\prime}(\Omega)$.
 	This together with \eqref{rev5} and \eqref{rcon} implies
 	\begin{equation}
 	\io\left(1+|\nabla u_{\tau}|^2+|\nabla u|^2\right)^{\frac{p-2}{2}}|\nabla u_{\tau}-\nabla u|^2dz\rightarrow 0.
 	\end{equation}
 	Remember that $2-p< p$. We estimate
 	\begin{eqnarray}
 	\lefteqn{	\io|\nabla u_{\tau}-\nabla u|dz}\nonumber\\
 	&=&\io\left(1+|\nabla u_{\tau}|^2+|\nabla u|^2\right)^{\frac{2-p}{4}}\left(1+|\nabla u_{\tau}|^2+|\nabla u|^2\right)^{\frac{p-2}{4}}|\nabla u_{\tau}-\nabla u|dz\nonumber\\
 	&\leq &\left(\io\left(1+|\nabla u_{\tau}|^2+|\nabla u|^2\right)^{\frac{2-p}{2}}dz\right)^{\frac{1}{2}}\nonumber\\
 	&&\cdot\left(\io\left(1+|\nabla u_{\tau}|^2+|\nabla u|^2\right)^{\frac{p-2}{2}}|\nabla u_{\tau}-\nabla u|^2dz\right)^{\frac{1}{2}}\nonumber\\
 	&\leq &c\left(\io\left(1+|\nabla u_{\tau}|^2+|\nabla u|^2\right)^{\frac{p-2}{2}}|\nabla u_{\tau}-\nabla u|^2dz\right)^{\frac{1}{2}}\nonumber\\
 	&\rightarrow& 0.\label{rev6}
 	\end{eqnarray} 
 	The claim follows.

 \end{proof}
 To finish the existence part of Theorem \ref{th1.1}, we conclude from
 Claims \ref{gup} and \ref{pstb} and \eqref{ptb}  that
 \begin{eqnarray}
 \nabla\ut&\rightarrow & \nabla u\ \ \ \mbox{ strongly in $\left(L^{s}(\Omega)\right)^2$ for each $p>s\geq 1$ and a.e. on $\Omega$,}\label{nuae}\\
 \pst&\rightarrow&  v\ \ \mbox{weakly in $W^{1,\frac{2p}{p+1}}(\Omega)$ and strongly in $L^q(\Omega)$ for each $q\geq 1$}.\nonumber
 \end{eqnarray} 
 Recall that
 \begin{equation*}
 D_\tau(\nabla \ut)=
 \left(\begin{array}{cc}
 1+\tau+\frac{(\ut)_x^2}{\nuta^2+\tau}\left(\frac{1}{1+q\nuta}-1\right)&\frac{(\ut)_x(\ut)_y}{\nuta^2+\tau}\left(\frac{1}{1+q\nuta}-1\right)\\
 \frac{(\ut)_x(\ut)_y}{\nuta^2+\tau}\left(\frac{1}{1+q\nuta}-1\right)&1+\tau+\frac{(\ut)_y^2}{\nuta^2+\tau}\left(\frac{1}{1+q\nuta}-1\right)
 \end{array}\right).
 \end{equation*} 
 Obviously, for a.e. $z\in \Omega$ we have that
 \begin{equation*}
 D_\tau(\nabla \ut(z))\rightarrow\left\{\begin{array}{cc}
 D(\nabla u(z))&\mbox{if $\nabla u(z)\ne 0$,}\\
 I&\mbox{if $\nabla u(z)= 0$}.
 \end{array}\right.
 \end{equation*}
 That is, each entry of $	D_\tau(\nabla \ut)$ converges a.e on $\Omega$. It is also bounded. Therefore,
 \begin{equation*}
 D_\tau(\nabla \ut)\nabla\pst\rightarrow D(\nabla u(z))\nabla v\ \ \ \mbox{weakly in $\left(L^{\frac{2p}{p+1}}(\Omega)\right)^{2}$}.
 \end{equation*}
 We may assume that
 \begin{equation*}
 \frac{\nabla\ut}{(\nuta^2+\tau)^{\frac{1}{2}}}\rightarrow h\ \ \mbox{weak$^*$ in $\left(L^\infty(\Omega)\right)^2$}.
 \end{equation*}
 We claim
 \begin{equation}\label{haha1}
 h(z)\in \partial\Phi(\nabla u(z))\ \ \ \mbox{for a.e. $z\in \Omega$,}
 \end{equation}
 where	$\Phi$ is given as in \eqref{fpd}.
 To see this, we derive from \eqref{nuae} that
 \begin{equation*}
 \frac{\nabla\ut(z)}{(|\nabla\ut(z)|^2+\tau)^{\frac{1}{2}}}\rightarrow \frac{\nabla u(z)}{|\nabla u(z)|} =h(z)\ \ \mbox{for a.e. $z\in \{|\nabla u|>0\}$}.
 \end{equation*}
 We always have
 \begin{equation*}
 \left|\frac{\nabla\ut}{(\nuta^2+\tau)^{\frac{1}{2}}}\right|\leq 1.
 \end{equation*}
 Consequently, $|h|\leq 1 $. This gives \eqref{haha1}
 
 We are ready to pass to the limit in \eqref{ot1t}-\eqref{ot2t}. This completes the proof of the existence part of Theorem \ref{th1.1}. 
 
 Finally, we remark that if $p\geq 2$ we need to apply a suitable version of (i) in Lemma \ref{lplap} in the proof of Claim \ref{gup}.
 We shall omit the details.
 \section{Regularity}\label{sec4}
 
 In this section we assume that condition \eqref{r1111} holds.
 
 \begin{clm}\label{nub}
 	Let	\eqref{r1111} be satisfied. Then $\{\ut\}$ is bounded in $W^{1,\infty}(\Omega)$.
 \end{clm}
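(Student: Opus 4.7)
The plan is to differentiate equation \eqref{ot3t} with respect to $x_k$ (rigorously via tangential difference quotients after a $C^{1,1}$ local flattening of $\partial\Omega$) and run a modified De~Giorgi iteration on the resulting linear equation for $w=(u_\tau)_{x_k}$,
\[
-\partial_i\bigl(a_{ij}(\nabla u_\tau)\,\partial_j w\bigr)+\tau(p-1)|u_\tau|^{p-2}w \;=\; \partial_{x_k} v_\tau,
\]
where $a_{ij}(\xi)=\rho_\tau(|\xi|^2)\delta_{ij}+2\rho_\tau'(|\xi|^2)\xi_i\xi_j$. A direct computation gives the one-sided ellipticity
\[
a_{ij}(\xi)\eta_i\eta_j \;\geq\; (p-1)\bigl(|\xi|^2+\tau\bigr)^{(p-2)/2}|\eta|^2,
\]
which degenerates precisely when $|\nabla u_\tau|$ is large --- the regime we are trying to rule out. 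This non-uniform ellipticity is the central difficulty and distinguishes the problem from the $p$-harmonic case studied in \cite{L}.

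With $K_n=K(1-2^{-n})$, $U_n=\{w>K_n\}$, and $Y_n=\|(w-K_n)^+\|_1$, I would test the differentiated equation against $\varphi=(w-K_{n+1})^+$, integrate the right-hand side by parts to move the $x_k$-derivative onto $\varphi$, use Claim~\ref{pstb} to bound $\|v_\tau\|_\infty$, and apply Young's inequality to arrive at the Caccioppoli-type estimate
\[
\int_{U_{n+1}}(|\nabla u_\tau|^2+\tau)^{(p-2)/2}|\nabla w|^2\,dz \;\leq\; c\int_{U_{n+1}}(|\nabla u_\tau|^2+\tau)^{(2-p)/2}\,dz \;\leq\; c\,|U_{n+1}|^{(2p-2)/p},
\]
the last step via Hölder against the uniform $L^p$-bound \eqref{ue}. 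A second Hölder application with conjugate exponents $2/p,\,2/(2-p)$ then trades the degenerate $L^2$-bound for a clean $L^p$-bound
\[
\int_\Omega|\nabla(w-K_{n+1})^+|^p\,dz \;\leq\; c\,|U_{n+1}|^{p-1},
\]
which is the crucial maneuver that circumvents the degeneracy.

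The 2-D Sobolev embedding $W^{1,p}\hookrightarrow L^{2p/(2-p)}$ from Lemma~\ref{linterp}, together with the layer-cake inequality $|U_{n+1}|\leq 2^{n+1}Y_n/K$ and the monotonicity of $\{Y_n\}$, should then produce a recursion of the form
\[
Y_{n+1}\;\leq\;\frac{c\,b^n}{K^{\gamma}}\,\bigl(Y_n^{(5p-4)/(2p)}+Y_n^{(5p-2)/(2p)}\bigr)
\]
for suitable $b>1$ and $\gamma>0$. The binding exponent $(5p-4)/(2p)$ exceeds $1$ precisely when $p>4/3$, which is exactly the hypothesis \eqref{r1111}; Lemma~\ref{ynb} then forces $Y_n\to 0$ once $K$ is chosen large enough depending on $\|w^+\|_1$, giving $w\leq K$. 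Applying the same iteration to $-w$ and to $(u_\tau)_{y_k}$ yields a bound on $\|\nabla u_\tau\|_\infty$ uniform in $\tau$. The principal obstacle is the rigorous justification of the differentiation near $\partial\Omega$: one must use the $C^{1,1}$ flattening to legitimize tangential difference quotients, interact these cleanly with the Neumann condition $\rho_\tau(|\nabla u_\tau|^2)\nabla u_\tau\cdot\nu=0$, and recover control of the normal derivative of $w$ directly from \eqref{ot3t}, all while keeping every constant in the iteration independent of $\tau\in(0,1]$.
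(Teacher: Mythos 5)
Your route is genuinely different from the paper's, and it is striking that it produces the same threshold $p>\frac43$ from a different source. The paper does not differentiate the divergence-form equation directly: it first rewrites \eqref{ot3t} in non-divergence form, divides by the coefficient $H_p$ of $(\ut)_{yy}$, and only then differentiates in $x$, so that the resulting matrix $A_\tau$ is uniformly elliptic \emph{with bounded coefficients} on the superlevel sets $\{w\geq K_{n+1}\}$; the degeneracy is pushed entirely into the source $f_\tau=(\pst-\tau|\ut|^{p-2}\ut)/H_p$, and $p>\frac43$ is precisely what makes $\int f_\tau^2\lesssim\int(\nuta^2+\tau)^{2-p}$ controllable by the $L^{p/2}$ bound on $\nuta^2$ from \eqref{rue}. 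You instead keep the degenerate weight $(\nuta^2+\tau)^{(p-2)/2}$ in the Caccioppoli estimate and strip it off afterwards by H\"older against \eqref{rue}; your exponent bookkeeping is correct and $(5p-4)/(2p)>1$ is indeed equivalent to $p>\frac43$. Note, however, that both schemes hinge on the same unstated fact: on $\{w\geq K_{n+1}\}$ one has $\nuta\geq w\geq K_{n+1}\geq 1$ (this is why the paper insists $K\geq 2$), and this is what keeps the \emph{upper} bound of your matrix $a_{ij}$ --- in particular the singular $1$-Laplacian contribution of size $\beta(\nuta^2+\tau)^{-1/2}$, which is $O(\tau^{-1/2})$ near $\{\nabla\ut=0\}$ --- comparable to $(\nuta^2+\tau)^{(p-2)/2}$ uniformly in $\tau$. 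You should state this explicitly; it is the linchpin of the whole argument.

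The concrete gap is the Caccioppoli inequality itself: you test the differentiated equation against $(w-K_{n+1})^+$ over all of $\Omega$ with no cutoff. That test is not admissible. The equation for $w=(\ut)_{x_k}$ holds only in the interior, and $w$ satisfies no homogeneous conormal condition on $\po$, so the global test leaves uncontrolled boundary integrals of the form $\int_{\po}a_{ij}\partial_jw\,\nu_i\,(w-K_{n+1})^+\,dS$. The estimate must be localized (interior balls; boundary half-balls after the $C^{1,1}$ flattening, where only $\tilde u_{\eta_1}=0$ on the flat portion is available), and once a cutoff $\theta$ is inserted the cross term $2\io a_{ij}\partial_jw\,\theta\,\partial_i\theta\,(w-K_{n+1})^+dz$ appears. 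It can be absorbed --- using $|a_{ij}|\leq c(\nuta^2+\tau)^{(p-2)/2}$ on $U_{n+1}$, which again requires $\nuta\geq1$ there --- but it leaves behind a term of order $\frac{c4^n}{R^2}\io|(w-K_{n+1})^+|^2\theta^2dz$, an $L^2$ quantity that your $L^1$-based iterate $Y_n=\|(w-K_n)^+\|_1$ does not track. The natural repair is to run De~Giorgi on $Y_n=\int|(w-K_n)^+|^2dz$ as the paper does; the exponents change but the threshold $p>\frac43$ survives. Finally, the differentiation needs a priori $W^{2,q}$ regularity of $\ut$ at fixed $\tau$ (the paper obtains it from \cite{AZ} together with \cite{CFL}); your difference-quotient plan is a legitimate substitute but should be tied to that regularity rather than left as a remark.
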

 \begin{proof} We are inspired by an idea from (\cite{GT}, p. 300). For simplicity, we assume
 	\begin{equation}
 	\beta=1.
 	\end{equation}
 	First we can easily infer from the proof in \cite{AZ} that for each fixed $\tau>0$ the function $\ut$ is a $C^{1,\alpha}$ function for some $\alpha\in (0,1)$ (also see Chapters IV and V of \cite{LU}). Write \eqref{ot3t} in the form
 	\begin{eqnarray*}
 		\lefteqn{	-\left(|\nabla\ut|^2+\tau\right)^{\frac{p-2}{2}}\left(I+(p-2)\frac{\nabla\ut\otimes\nabla\ut}{|\nabla\ut|^2+\tau}\right):\nabla^2\ut}\\
 		&&-\left(|\nabla\ut|^2+\tau\right)^{-\frac{1}{2}}\left(I-\frac{\nabla\ut\otimes\nabla\ut}{|\nabla\ut|^2+\tau}\right):\nabla^2\ut=v_\tau-\tau|\ut|^{p-2}\ut\ \ \mbox{in $\Omega$.}
 	\end{eqnarray*} 
 	This puts us in a position to apply a result in \cite{CFL}, from whence follows that $\ut$ lies in $W^{2, q}(\Omega)$ for each $q>1$.
 	Thus we can calculate
 	\begin{eqnarray}
 	\mbox{div}\left((\nuta^2+\tau)^{\frac{p-2}{2}}\nabla\ut\right)&=&(\nuta^2+\tau)^{\frac{p-2}{2}}\Delta\ut\nonumber\\
 	&&+(p-2)(\nuta^2+\tau)^{\frac{p-4}{2}}\nabla\ut\otimes\nabla\ut :\nabla^2\ut\nonumber\\
 	&=&(\nuta^2+\tau)^{\frac{p-2}{2}}\left(I+\frac{p-2}{\nuta^2+\tau}\nabla\ut\otimes\nabla\ut\right):\nabla^2\ut\nonumber\\
 	&=&(\nuta^2+\tau)^{\frac{p-2}{2}}\left(E_p(\ut)_{xx}+2F_p(\ut)_{xy}+G_p(\ut)_{yy}\right),\label{dup}
 	\end{eqnarray}
 	where
 	\begin{eqnarray}
 	E_p&=&1+\frac{(p-2)(\ut)_x^2}{\nuta^2+\tau},\label{p11}\\
 	F_p&=&\frac{(p-2)(\ut)_x(\ut)_y}{\nuta^2+\tau},\label{p12}\\
 	G_p&=&1+\frac{(p-2)(\ut)_y^2}{\nuta^2+\tau}.\label{p13}
 	\end{eqnarray}
 	The preceding calculations are still valid if $p=1$. Subsequently,
 	\begin{equation}
 	\mbox{div}\left((\nuta^2+\tau)^{-\frac{1}{2}}\nabla\ut\right)=(\nuta^2+\tau)^{-\frac{1}{2}}\left(E_1(\ut)_{xx}+2F_1(\ut)_{xy}+G_1(\ut)_{yy}\right).\label{dul}
 	\end{equation}
 	Substitute \eqref{dup} and \eqref{dul} into \eqref{ot3t} and divide through the resulting equation by the coefficient of $(\ut)_{yy}$, which is
 	\begin{equation*}
 	H_p\equiv(\nuta^2+\tau)^{\frac{p-2}{2}}G_p+(\nuta^2+\tau)^{-\frac{1}{2}}G_1,
 	\end{equation*}  to deduce
 	\begin{eqnarray}
 	\lefteqn{-\frac{(\nuta^2+\tau)^{\frac{p-2}{2}}E_p+(\nuta^2+\tau)^{-\frac{1}{2}}E_1}{H_p}(\ut)_{xx}}\nonumber\\
 	&&-2\frac{(\nuta^2+\tau)^{\frac{p-2}{2}}F_p+(\nuta^2+\tau)^{-\frac{1}{2}}F_1}{H_p}(\ut)_{xy}\nonumber\\
 	&&-(\ut)_{yy}=\frac{\pst-\tau|\ut|^{p-2}\ut}{H_p}\equiv f_\tau.\label{plap1}
 	\end{eqnarray}
 	Let
 	\begin{equation}
 	w=(\ut)_x.
 	\end{equation}
 	By differentiating \eqref{plap1} with respect to $x$, we arrive at
 	\begin{equation}\label{eqfw}
 	-\mbox{div}\left(A_\tau\nabla w\right)=(f_\tau)_x,
 	\end{equation}
 	where 
 	\begin{equation}
 	A_\tau=\left(\begin{array}{cc}
 	\frac{(\nuta^2+\tau)^{\frac{p-2}{2}}E_p+(\nuta^2+\tau)^{-\frac{1}{2}}E_1}{H_p}& 2\frac{(\nuta^2+\tau)^{\frac{p-2}{2}}F_p+(\nuta^2+\tau)^{-\frac{1}{2}}F_1}{H_p}\\
 	0 & 1
 	\end{array}\right).
 	\end{equation}
 	For $\xi=(\xi_1,\xi_2)^T$,	we compute
 	\begin{eqnarray}
 	A_\tau\xi\cdot\xi&=&\frac{(\nuta^2+\tau)^{\frac{p-2}{2}}E_p+(\nuta^2+\tau)^{-\frac{1}{2}}E_1}{H_p}\xi_1^2\nonumber\\
 	&&+2\frac{(\nuta^2+\tau)^{\frac{p-2}{2}}F_p+(\nuta^2+\tau)^{-\frac{1}{2}}F_1}{H_p}\xi_1\xi_2+\xi_2^2\nonumber\\
 	&=&\frac{(\nuta^2+\tau)^{\frac{p-2}{2}}}{H_p}\left(E_p\xi_1^2+2F_p\xi_1\xi_2+G_p\xi_2^2\right)\nonumber\\
 	&&+	\frac{(\nuta^2+\tau)^{-\frac{1}{2}}}{H_p}\left(E_1\xi_1^2+2F_1\xi_1\xi_2+G_1\xi_2^2\right).\label{wine1}
 	\end{eqnarray}
 	We derive from \eqref{p11}-\eqref{p13} that
 	\begin{eqnarray}
 	E_p\xi_1^2+2F_p\xi_1\xi_2+G_p\xi_2^2&=&\left(I+\frac{p-2}{\nuta^2+\tau}\nabla\ut\otimes\nabla\ut\right)\xi\cdot\xi\nonumber\\
 	&=&|\xi|^2+(p-2)\frac{(\xi\cdot\nabla\ut)^2}{\nuta^2+\tau}.
 	\end{eqnarray}
 	Therefore,
 	\begin{eqnarray}
 	(p-1)|\xi|^2&\leq& E_p\xi_1^2+2F_p\xi_1\xi_2+G_p\xi_2^2\leq |\xi|^2\ \ \mbox{for $p\in (1,2)$}.
 	\end{eqnarray}
 	Similarly,
 	\begin{equation}
 	0\leq E_1\xi_1^2+2F_1\xi_1\xi_2+G_1\xi_2^2\leq |\xi|^2.\label{wine3}
 	\end{equation}
 	It is also easy to check
 	\begin{equation}
 	(p-1)\leq G_p=1+\frac{(p-2)(\ut)_y^2}{\nuta^2+\tau}\leq 1,
 	\end{equation}
 	while
 	\begin{equation}
 	0\leq G_1=1-\frac{(\ut)_y^2}{\nuta^2+\tau}=\frac{(\ut)_x^2+\tau}{\nuta^2+\tau}\leq 1.
 	\end{equation}
 	Now we consider	
 	\begin{eqnarray}
 	\frac{(\nuta^2+\tau)^{\frac{p-2}{2}}}{H_p}&=&\frac{(\nuta^2+\tau)^{\frac{p-2}{2}}}{	(\nuta^2+\tau)^{\frac{p-2}{2}}G_p+(\nuta^2+\tau)^{-\frac{1}{2}}G_1}\nonumber\\
 	&\leq &\frac{1}{G_p}
 	\leq \frac{1}{(p-1)}.\label{wine4}
 	\end{eqnarray}
 	On the other hand,
 	\begin{eqnarray}
 	\frac{(\nuta^2+\tau)^{\frac{p-2}{2}}}{H_p}&=&\frac{1}{G_p+(\nuta^2+\tau)^{-\frac{p-1}{2}}G_1}\nonumber\\
 	&=&\frac{1}{G_p+(\nuta^2+\tau)^{-\frac{p+1}{2}}((\ut)_x^2+\tau)}\nonumber\\
 	&\geq&\frac{1}{1+(\nuta^2+\tau)^{-\frac{p-1}{2}}}\nonumber\\
 	&=&\frac{(\nuta^2+\tau)^{\frac{p-1}{2}}}{1+(\nuta^2+\tau)^{\frac{p-1}{2}}}.\label{wine5}
 	\end{eqnarray}
 	We still need to estimate
 	\begin{eqnarray}
 	\frac{(\nuta^2+\tau)^{-\frac{1}{2}}}{H_p}&=&\frac{(\nuta^2+\tau)^{-\frac{1}{2}}}{	(\nuta^2+\tau)^{\frac{p-2}{2}}G_p+(\nuta^2+\tau)^{-\frac{1}{2}}G_1}\nonumber\\
 	&\leq &	\frac{(\nuta^2+\tau)^{-\frac{p-1}{2}}}{G_p}\leq \frac{1}{(p-1)(\nuta^2+\tau)^{\frac{p-1}{2}}}\label{wine6}
 	\end{eqnarray}
 	and
 	\begin{eqnarray}
 	\frac{1}{H_p}&=&\frac{1}{	(\nuta^2+\tau)^{\frac{p-2}{2}}G_p+(\nuta^2+\tau)^{-\frac{1}{2}}G_1}\nonumber\\
 	&\leq &\frac{1}{(p-1) (\nuta^2+\tau)^{\frac{p-2}{2}}}.\label{wine7}
 	\end{eqnarray}
 	We are now ready to show that 
 	\begin{equation*}
 	w\in L^\infty_{\mbox{loc}}(\Omega).
 	\end{equation*}	To this end, we fix a point $z_0\in \Omega$. Then pick a number $R$ from $(0, \min\{\mbox{dist}(z_0,\po),1\})$. Define a sequence of concentric balls $B_{R_n}(z_0)$ in $\Omega$ as follows:
 	\begin{equation*}
 	B_{R_n}(z_0)=\{z:|z-z_0|<R_n\},
 	\end{equation*}
 	where
 	\begin{equation*}
 	R_n=\frac{R}{2}+\frac{R}{2^{n+1}},\ \ \ n=0,1,2,\cdots.
 	\end{equation*} Choose a sequence of smooth functions $\theta_n$ so that
 	\begin{eqnarray*}
 		\theta_n(z)&=& 1 \ \ \mbox{in $B_{R_n}(z_0)$},\\
 		\theta_n(z)&=&0\ \ \mbox{outside $B_{R_{n-1}}(z_0)$},\\
 		|\nabla \theta_n(z)|&\leq & \frac{c2^n}{R}\ \ \mbox{for each $z\in \mathbb{R}^2$,}\ \ \ \mbox{and}\\
 		0&\leq &\theta_n(z)\leq 1\ \ \mbox{in $\mathbb{R}^2$.}
 	\end{eqnarray*}
 	Select
 	\begin{equation*}
 	K\geq 2
 	\end{equation*}
 	as below.
 	Set
 	\begin{equation*}
 	K_n=K-\frac{K}{2^{n+1}},\ \ \ n=0,1,2,\cdots.
 	\end{equation*}
 	Obviously,
 	\begin{equation}\label{kn}
 	K_n\geq 1\ \ \ \mbox{for all $n$.}
 	\end{equation}
 	Without loss of generality,  assume that
 	\begin{equation}\label{kn1}
 	\max_{B_{\frac{R}{2}}(z_0)}w=\max_{B_{\frac{R}{2}}(z_0)}w^+.
 	\end{equation}
 	We use $\theta_{n+1}^2(w-K_{n+1})^+$ as a test function in \eqref{eqfw} to obtain
 	\begin{eqnarray}
 	\io A_\tau\nabla w\cdot\nabla(w-K_{n+1})^+\theta_{n+1}^2dz&=&-2\io A_\tau\nabla w\cdot\nabla\theta_{n+1}(w-K_{n+1})^+\theta_{n+1}dz\nonumber\\
 	&&-\io f_\tau\theta_{n+1}^2(w-K_{n+1})^+_xdz\nonumber\\
 	&&-2\io f_\tau\theta_{n+1}\left(\theta_{n+1}\right)_x(w-K_{n+1})^+dz.\label{eqfw1}
 	\end{eqnarray}
 	We deduce from \eqref{wine1}, \eqref{wine3}, \eqref{wine4}, and \eqref{wine5} that
 	\begin{eqnarray}
 	A_\tau\nabla w\nabla(w-K_{n+1})^+&=&A_\tau\nabla(w-K_{n+1})^+\cdot \nabla(w-K_{n+1})^+\nonumber\\
 	&\geq&\frac{c(\nuta^2+\tau)^{\frac{p-1}{2}}}{1+(\nuta^2+\tau)^{\frac{p-1}{2}}}|\nabla(w-K_{n+1})^+|^2.
 	\end{eqnarray}
 	Observe from \eqref{kn} that
 	\begin{equation*}
 	\nuta\geq |(\ut)_x|=(\ut)_x=w\geq K_{n+1}\geq 1\ \ \mbox{on the set $\{w\geq K_{n+1}\}$}.
 	\end{equation*}
 	Therefore,
 	\begin{eqnarray*}
 		A_\tau\nabla w\nabla(w-K_{n+1})^+&\geq &\frac{c(1+\tau)^{\frac{p-1}{2}}}{1+(1+\tau)^{\frac{p-1}{2}}}|\nabla(w-K_{n+1})^+|^2\nonumber\\
 		&\geq &c|\nabla(w-K_{n+1})^+|^2.
 	\end{eqnarray*}
 	In view of \eqref{wine3} and \eqref{wine6}, we obtain
 	\begin{eqnarray*}
 		\left|A_\tau\nabla w\right|(w-K_{n+1})^+&\leq&  \left(c+\frac{1}{(p-1)(\nuta^2+\tau)^{\frac{p-1}{2}}}\right)|\nabla w|(w-K_{n+1})^+\nonumber\\
 		&\leq &\left(c+\frac{1}{(p-1)(1+\tau)^{\frac{p-1}{2}}}\right)|\nabla (w-K_{n+1})^+|(w-K_{n+1})^+\nonumber\\
 		&\leq& c|\nabla (w-K_{n+1})^+|(w-K_{n+1})^+.
 	\end{eqnarray*}
 	Plug the preceding two inequalities into \eqref{eqfw1} and apply Young's inequality
 	appropriately to derive
 	\begin{eqnarray*}
 		\int_{B_{R_n}(z_0)}|\nabla(w-K_{n+1})^+|^2\theta_{n+1}^2dz&\leq& \frac{c4^n}{R^2}\int_{B_{R_n}(z_0)}|(w-K_{n+1})^+|^2dz\nonumber\\
 		&&+c\int_{	S_{n+1}}f_\tau^2dz,
 	\end{eqnarray*}
 	where
 	\begin{equation*}
 	S_{n+1}=B_{R_n}(z_0)\cap\{w\geq K_{n+1}\}.
 	\end{equation*}
 	Set 
 	\begin{equation*}
 	Y_n=\int_{B_{R_n}(z_0)}|(w-K_{n})^+|^2dz.
 	\end{equation*}
 	For each $s>2$ we conclude from Poincar\'{e}'s inequality that
 	\begin{eqnarray}
 	\left(\int_{B_{R_n}(z_0)}|(w-K_{n+1})^+\theta_{n+1}|^sdz\right)^{\frac{2}{s}}&\leq &c\left(\int_{B_{R_n}(z_0)}|\nabla\left((w-K_{n+1})^+\theta_{n+1}\right)|^{\frac{2s}{s+2}}dz\right)^{\frac{s+2}{s}}\nonumber\\
 	&\leq &c\int_{B_{R_n}(z_0)}|\nabla\left((w-K_{n+1})^+\theta_{n+1}\right)|^{2}dz|S_{n+1}|^{\frac{2}{s}}\nonumber\\
 	&\leq &	\frac{c4^n}{R^2}Y_n|S_{n+1}|^{\frac{2}{s}}+c\int_{	S_{n+1}}f_\tau^2dz|S_{n+1}|^{\frac{2}{s}}.\label{wine8}
 	\end{eqnarray}
 	By H\"{o}lder's inequality with exponent $\frac{s}{2}$,
 	\begin{eqnarray}
 	Y_{n+1}
 	&\leq &\int_{B_{R_n}(z_0)}|(w-K_{n+1})^+\theta_{n+1}|^2dz\nonumber\\
 	&\leq &\left(\int_{B_{R_n}(z_0)}|(w-K_{n+1})^+\theta_{n+1}|^sdz\right)^{\frac{2}{s}}|S_{n+1}|^{1-\frac{2}{s}}\nonumber\\
 	&\leq &	\frac{c4^n}{R^2}Y_n|S_{n+1}|+c\int_{	S_{n+1}}f_\tau^2dz|S_{n+1}|.\label{wine10}
 	\end{eqnarray}
 	%
 	Our assumption on $p$ in \eqref{r1111} implies
 	\begin{equation*}
 	2(2-p)<p.
 	\end{equation*}
 	With this in mind, we estimate the last integral in \eqref{wine10} from \eqref{wine7} as follows:
 	\begin{eqnarray}
 	\int_{S_{n+1}}f_\tau^2dz&\leq&\int_{S_{n+1}}
 	\frac{(\pst-\tau|\ut|^{p-2}\ut)^2}{\left((p-1) (\nuta^2+\tau)^{\frac{p-2}{2}}\right)^2}dz\nonumber\\
 	&\leq &
 	c\int_{S_{n+1}}(\nuta^2+\tau)^{2-p}dz\nonumber\\
 	&\leq&c\left(\int_{S_{n+1}}(\nuta^2+\tau)^{\frac{p}{2}}dz\right)^{\frac{2(2-p)}{p}}|S_{n+1}|^{\frac{3p-4}{p}}\leq c|S_{n+1}|^{\frac{3p-4}{p}},\label{winer}
 	\end{eqnarray}
 	where Claims \ref{utb} and \ref{pstb} have been used.
 	Note that
 	\begin{eqnarray}
 	Y_n&=&\int_{B_{R_n}(z_0)}|(w-K_{n})^+|^2dz\nonumber\\
 	&\geq &\int_{	S_{n+1}}(K_{n+1}-K_n)^2dz=|S_{n+1}|\frac{K^2}{4^{n+2}},\ \ \mbox{and}\label{wine11}\\
 	|S_{n+1}|&=&|S_{n+1}|^{\frac{2(2-p)}{p}}|S_{n+1}|^{\frac{3p-4}{p}}\leq cR^{\frac{4(2-p)}{p}}|S_{n+1}|^{\frac{3p-4}{p}}.\nonumber
 	\end{eqnarray}
 	Combining the preceding four inequalities
 	yields
 	\begin{eqnarray*}
 		Y_{n+1}&\leq& \frac{c4^n4^{\frac{n(3p-4)}{p}}}{R^{\frac{6p-8}{p}}K^{\frac{6p-8}{p}}}Y_n^{1+\frac{3p-4}{p}}+c\frac{4^{\frac{4n(p-1)}{p}}}{K^{\frac{8(p-1)}{p}}}Y_n^{1+\frac{3p-4}{p}}\nonumber\\
 		&\leq & \frac{c16^n}{R^{\frac{6p-8}{p}}K^{\frac{6p-8}{p}}}Y_n^{1+\frac{3p-4}{p}}.
 	\end{eqnarray*}
 	Choose $K\geq 2$ so that
 	\begin{eqnarray*}
 		Y_0\leq c\left(R^{\frac{6p-8}{p}}K^{\frac{6p-8}{p}}\right)^{\frac{p}{3p-4}}=cR^2K^2,
 	\end{eqnarray*}
 	from whence follows
 	\begin{equation*}
 	K\geq c\left(\frac{1}{R^2}Y_0\right)^{\frac{1}{2}}.
 	\end{equation*}
 	This combined with Lemma \ref{ynb} and \eqref{kn1} gives
 	\begin{eqnarray}
 	\sup_{B_{\frac{R}{2}}(z_0)}|(\ut)_x|&\leq& 2+c\left(\frac{1}{R^2}\int_{B_{R}(z_0)}(\ut)_x^2dz\right)^{\frac{1}{2}}.\label{fr}
 	\end{eqnarray}
 	Obviously, the above estimate remains valid if we substitute $(\ut)_y$ with $(\ut)_x$. This leads to the following inequality
 	\begin{equation*}
 	\sup_{B_{\frac{R}{2}}(z_0)}|\nabla\ut|\leq c+c\left(\frac{1}{R^2}\int_{B_{R}(z_0)}\nuta^2dz\right)^{\frac{1}{2}}.
 	\end{equation*}
 	This is the so-called local interior estimate. However, it is not difficult to extend it to an $L^\infty(\Omega)$ estimate (\cite{GT}, p. 303). Indeed, if $z_0\in \po$, our assumption on the boundary implies that there exist a neighborhood $U(z_0)$ of $z_0$ and a $C^{1,1}$ diffeomorphism $\mathbb{T}$ defined on $U(z_0)$ such that the image of $U(z_0)\cap\Omega$ under $\mathbb{T}$ is the half ball $B_\delta^+(0,\eta_2^0)=\{(\eta_1, \eta_2): \eta_1^2+(\eta_2-\eta_2^0)^2< \delta^2, \eta_1>0\} $, where $\delta>0,  (0, \eta_2^0)=\mathbb{T}(z_0)$. That is to say, we have straightened a portion of the boundary into a segment
 	of $\eta_1=0$ in the $(\eta_1, \eta_2)$ plane \cite{CFL}.
 	Set
 	\begin{equation*}
 	\tilde{u}=u\circ\mathbb{T}^{-1}, \ \ \tilde{w}=\tilde{u}_{\eta_1}.
 	\end{equation*}
 	Then we can choose $\mathbb{T} $ so that $\tilde{w}$  satisfies the boundary condition
 	\begin{equation}\label{bc}
 	\tilde{w}(0,\eta_2)=\tilde{u}_{\eta_1}(0,\eta_2)=0.
 	\end{equation}
 	One way of doing this is to pick $\mathbb{T}=\left(\begin{array}{c}
 	f_1(z)\\
 	f_2(z)
 	\end{array}\right)$ so that the graph of $f_1(z)=0$ is $U(z_0)\cap\po$ and the set of vectors $\{\nabla f_1, \nabla f_2\}$ is orthogonal.
 	By a result in \cite{X6}, $\tilde{w}$ satisfies the equation
 	\begin{equation*}
 	-\mbox{div}\left[(J_{\mathbb{T}}^TA_\tau J_{\mathbb{T}})\circ \mathbb{T}^{-1}\nabla\tilde{w}\right]=(h_1, h_2)(A_\tau J_{\mathbb{T}})\circ \mathbb{T}^{-1}\nabla\tilde{w}+(f_\tau)_x\circ \mathbb{T}^{-1}\ \ \mbox{in $B_\delta^+(0,\eta_0)$},
 	\end{equation*}
 	where $J_{\mathbb{T}}$ is the Jacobian matrix of $\mathbb{T}$, i.e., 
 	\begin{equation*}
 	J_{\mathbb{T}}=\nabla\mathbb{T}
 	\end{equation*} and the functions $h_1$ and $ h_2$ comprise first and second order partial derivatives of $\mathbb{T}$ and  are, therefore, bounded by our assumption 
 	on $\mathbb{T}$. In view of \eqref{bc}, the method employed to prove \eqref{fr} still works here. The only difference is that we use $B_{R_n}^+(0, \eta_2^0)$ instead of $B_{R_n}(0, \eta_2^0)$ in the proof.
 	Hence we can conclude that
 	\begin{eqnarray}
 	\sup_{\Omega}|\nabla\ut|\leq c+c\left(\int_{\Omega}\nuta^2dz\right)^{\frac{1}{2}}.\label{haha}
 	\end{eqnarray}
 	By \eqref{otn9}, for each $\varepsilon>0$ there is a number $c$ such that
 	\begin{equation*}
 	\|\nabla\ut\|_2\leq \varepsilon \|\nabla\ut\|_\infty+c\|\nabla\ut\|_1\leq \varepsilon \|\nabla\ut\|_\infty+c.
 	\end{equation*}
 	This together with \eqref{haha} gives the desired result.
 \end{proof}
 We would like to remark that it does not seem possible that one can derive equations similar to \eqref{eqfw} for the partial derivatives of $u_\tau$ when the space dimension $N\geq 3$. This is the main reason for the assumption $N=2$.
 We also give an indication why  the case where $p\geq 2$ is easier. This is mainly due to the fact that in this case $H_p$ is bounded away from $0$ below on the set $\{w\geq 1\}$ (see \eqref{wine7}).
 \begin{clm}
 	The sequence $\{\pst\}$ is precompact in $W^{1,2}(\Omega)$.
 \end{clm}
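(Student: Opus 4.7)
The plan is to exploit Claim \ref{nub}, which makes the coefficient matrix $D_\tau(\nabla \ut)$ uniformly elliptic in $\tau$, to upgrade the existing weak $W^{1,\frac{2p}{p+1}}$ bound on $\pst$ to a uniform $W^{1,2}$ bound, and then to promote the weak convergence to strong convergence by a standard energy/Minty-type argument.

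First, since $\|\nabla\ut\|_\infty\leq c$ by Claim \ref{nub}, inequality \eqref{r22} upgrades to the uniform coercivity
\begin{equation*}
D_\tau(\nabla\ut)\xi\cdot\xi\geq \frac{1}{1+qc}|\xi|^2\qquad\text{for each $\xi\in\mathbb{R}^2$.}
\end{equation*}
Testing \eqref{ot1t} with $\pst$ and invoking Claims \ref{utb} and \ref{pstb} together with this coercivity yields $\|\nabla\pst\|_2\leq c$. Hence, along a subsequence, $\pst\rightharpoonup v$ weakly in $W^{1,2}(\Omega)$; by uniqueness of weak limits this $v$ agrees with the one already constructed in the existence part, and by the Rellich--Kondrachov theorem $\pst\to v$ strongly in $L^q(\Omega)$ for every finite $q$.

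Next I would use $\pst-v$ as a test function in \eqref{ot1t}, which is admissible since both functions are in $W^{1,2}(\Omega)\cap L^\infty(\Omega)$ uniformly. This gives
\begin{equation*}
\io D_\tau(\nabla\ut)\nabla\pst\cdot\nabla(\pst-v)\,dz+\tau\io\pst(\pst-v)\,dz+a\io\ut(\pst-v)\,dz=\io f(\pst-v)\,dz.
\end{equation*}
The last three terms in this identity vanish in the limit thanks to the strong $L^2$ convergences established above (and $\tau\to 0$). Splitting the remaining term as
\begin{equation*}
\io D_\tau(\nabla\ut)\nabla(\pst-v)\cdot\nabla(\pst-v)\,dz+\io D_\tau(\nabla\ut)\nabla v\cdot\nabla(\pst-v)\,dz,
\end{equation*}
I would argue that the second integral tends to zero: each entry of $D_\tau(\nabla\ut)$ is uniformly bounded and converges a.e.\ to the corresponding entry of $D(\nabla u)$ (on $\{\nabla u\neq 0\}$ this is immediate from $\nabla\ut\to\nabla u$ a.e., while on $\{\nabla u=0\}$ the factor $\frac{1}{1+q|\nabla\ut|}-1\to 0$ kills the possibly oscillating ratios), so the Dominated Convergence Theorem delivers $D_\tau(\nabla\ut)\nabla v\to D(\nabla u)\nabla v$ strongly in $L^2(\Omega)$, which pairs against the weak $L^2$-null sequence $\nabla(\pst-v)$ to produce zero. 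Consequently
\begin{equation*}
\io D_\tau(\nabla\ut)\nabla(\pst-v)\cdot\nabla(\pst-v)\,dz\to 0,
\end{equation*}
and the uniform coercivity then forces $\nabla\pst\to\nabla v$ strongly in $L^2(\Omega)$.

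I do not expect serious obstacles here; the essential point is the availability of Claim \ref{nub}, which is what finally makes $D_\tau(\nabla\ut)$ uniformly elliptic and so converts the problem into the familiar linear, uniformly elliptic framework. The only delicate step is justifying the a.e.\ convergence of the entries of $D_\tau(\nabla\ut)$ on the facet set $\{\nabla u=0\}$, but this is handled by the cancellation $\frac{1}{1+q|\nabla\ut|}-1\to 0$ noted in the introduction, which is exactly the same device used to define $D(\nabla u)$ consistently.
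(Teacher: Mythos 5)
Your proposal is correct and follows essentially the same route as the paper: Claim \ref{nub} plus the coercivity \eqref{r22} (equivalently \eqref{rue}) gives the uniform $W^{1,2}$ bound, and then testing with $\pst-v$, using the a.e.\ convergence and uniform boundedness of the entries of $D_\tau(\nabla\ut)$ to get strong $L^2$ convergence of $D_\tau(\nabla\ut)\nabla v$ against the weakly null $\nabla(\pst-v)$, yields strong $W^{1,2}$ convergence via the now-uniform ellipticity. This matches the paper's argument step for step.
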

 \begin{proof} 
 	The preceding Claim combined with \eqref{rue} and \eqref{mell} implies that $\{\pst\}$ is a bounded sequence in $W^{1,2}(\Omega)$.
 	Recall that
 	each entry of $	D_\tau(\nabla \ut)$ is bounded and converges a.e on $\Omega$. Therefore,
 	\begin{equation*}
 	D_\tau(\nabla \ut)\nabla\pst\rightarrow D(\nabla u(z))\nabla v\ \ \ \mbox{weakly in $\left(L^2(\Omega)\right)^{2\times 2}$}.
 	\end{equation*}
 	Adding $\mbox{div}\left(D(\nabla u )\nabla v\right)$ to both sides of  \eqref{ot1t} yields
 	\begin{eqnarray*}
 		-\mbox{div}\left(D_\tau(\nabla u_{\tau} )(\nabla v_{\tau}-\nabla v)\right)&=&\mbox{div}\left(D_\tau(\nabla u_{\tau}) \nabla v\right)-au_\tau-\tau\pst+f.
 	\end{eqnarray*}
 	Use $\pst- v$ as a test function in the above equation and keep in mind Claim \ref{nub} to get
 	\begin{eqnarray*}
 		c\io|\nabla v_{\tau}-\nabla v|^2dx&\leq&\io D_\tau(\nabla u_{\tau} )(\nabla v_{\tau}-\nabla v)\cdot(\nabla v_{\tau}-\nabla v)dz\nonumber\\
 		&=&\io D_\tau(\nabla u_{\tau}) \nabla v(\nabla v_{\tau}-\nabla v)dz\nonumber\\
 		&&+\io(-au_\tau-\tau\pst+f)(\pst- v)dz\nonumber\\
 		&\rightarrow& 0.
 	\end{eqnarray*}
 	The proof is complete.
 \end{proof}

\end{document}